\newtheorem{thm}{Theorem}[section]
\newtheorem{lemma}[thm]{Lemma}
\newtheorem{prop}[thm]{Proposition}
\newtheorem{notn}[thm]{Notation}
\newtheorem{rem}[thm]{Remark}
\newtheorem{rmk}[thm]{Remark}
\newtheorem{set}[thm]{Setup}
\def\P{{\mathbb P}}
\def\Q{{\mathbb Q}}
\def\R{{\mathbb R}}
\def\Z{{\mathbb Z}}
\def\cH{{\mathcal{H}}}
\def\cO{{\mathcal{O}}}
\def\Q{{\mathbb{Q}}}
\def\G{{\mathbb{G}}}
\def\f{\varphi}
\def\operatorname#1{\mathop{\rm #1}\nolimits}
\def\Hilb{\operatorname{Hilb}}
\def\Hom{\operatorname{Hom}}
\def\Pic{\operatorname{Pic}}
\def\rank{\operatorname{rank}}
\def\deg{\operatorname{deg}}
\def\det{\operatorname{det}}
\def\coker{\operatorname{coker}}
\begin{document}

\title{Varieties swept out by grassmannians of lines}

%    Information for first author
\author{Roberto Mu\~noz}
%    Address of record for the research reported here
\address{Departamento de Matem\'atica Aplicada, ESCET, Universidad
Rey Juan Carlos, 28933-M\'ostoles, Madrid, Spain}
%    Current address
%\curraddr{Department of Mathematics and Statistics,
%Case Western Reserve University, Cleveland, Ohio 43403}
\email{roberto.munoz@urjc.es; luis.sola@urjc.es}
%    \thanks will become a 1st page footnote.
\thanks{Partially supported by the Spanish government project MTM2006-04785.}

%    Information for second author
\author{Luis E. Sol\'a Conde}
%\address{Departamento de Matem\'atica Aplicada, ESCET, Universidad
%Rey Juan Carlos, 28933-M\'ostoles, Madrid, Spain}
%\email{luis.sola@urjc.es}
%\thanks{Partially supported by the Spanish government project MTM2006-04785.}

%    General info
\subjclass{Primary 14M15; Secondary 14E30, 14J45}
%\date{}

\dedicatory{Dedicated to Andrew J. Sommese in his 60th birthday.}

\keywords{Grassmannians of lines; minimal rational curves; Mori
contractions; nef value morphism}

\begin{abstract} We classify complex projective varieties
of dimension $2r \geq 8$ swept out by a family of codimension two
grassmannians of lines $\mathbb{G}(1,r)$. They are either 
fibrations onto normal surfaces such that the general fibers are
isomorphic to $\G(1,r)$ or the grassmannian
$\mathbb{G}(1,r+1)$. The cases $r=2$ and $r=3$ are also considered
in the more general context of varieties swept out by codimension
two linear spaces or quadrics.
\end{abstract}

\maketitle

\section{Introduction}\label{sec:intro}

Let $X$ be a smooth complex projective variety. It is a classical
question in algebraic geometry to understand to which extent the
geometry of $X$ is determined by a particular family of subvarieties
of $X$.

Perhaps the first subvarieties that algebraic geometers have
considered in that sense are lines in projective varieties
$X\subset\P^N$. Examples of the use of this idea can be found all
throughout the literature, evolving into the study of rational curves
in algebraic varieties that has become a central part of algebraic
geometry since Mori's landmark work in 1980's. In this paper we will
make use of the work of Beltrametti, Sommese and Wi\'sniewski
(\cite{BSW}), where they study polarized manifolds $(X,H)$ swept out
by lines, i.e. rational curves of $H$-degree one.

A naturally related goal is the classification of projective varieties
$X\subset\P^N$ dominated by families of linear subspaces $L=\P^t$, see
for instance \cite{Ein}, \cite{BSW}, \cite{ABW}. The general
philosophy here is that these varieties may be classified if the
codimension $\dim(X)-t$ is small. In fact, Sato classified them for
$\dim(X)\leq 2t$ (cf. \cite[Main Thm.]{sato}) and recently Novelli and
Occhetta have completed the case $\dim(X)=2t+1$ (cf.  \cite[Thm
1.1]{nov-occ}).

One could also study varieties swept out by other types of
subvarieties.  We would like to point out two different directions. On
one side we have the {\it extendability problem}, i.e. study which
algebraic varieties may appear as an ample divisor on a smooth
variety. For instance, it is well known that the quadric $\Q^{n}$ can
only appear as an ample divisor in $\P^{n+1}$ or $\Q^{n+1}$ and that
the grassmannian of lines $\G(1,r)$ is not extendable for $r\geq 4$
(cf.  \cite{F2}).  Extendability has been studied for many other
varieties, see for instance \cite{B} and the references therein. On
the other side one could consider subvarieties of codimension bigger
than one.  The case of quadrics has been treated by several authors,
see for instance \cite{KS}, \cite{Fu} and \cite{BI}. In the three cases
they study embedded projective varieties $X\subset\P^N$ swept out by
quadrics of small codimension.

Putting together the previous considerations we find of interest the
problem of classificating varieties swept out by codimension two
grassmannians of lines. Our main result is the following:

\begin{thm}\label{thm:mainthm}
  Let $(X,H)$ be a polarized variety of dimension $2r$, $r \geq 4$.
  Suppose that $X$ is dominated by deformations of a subvariety
  $G\subset X$ isomorphic to $\mathbb{G}(1,r)$, such that $H|_G$ is
  the ample generator of $\Pic(G)$. Assume further that $H$ is very
  ample and $H^1(X, \mathcal{I}_{G/X}(H))=0$. Then either:
\begin{itemize}
\item[\ref{thm:mainthm}.1.] there exists a morphism $\Phi:X \to Y$
  onto a normal surface such that the general fiber is isomorphic to
  $G$, or
\item[\ref{thm:mainthm}.2.] $X=\mathbb{G}(1,r+1)$ and $H$ is the ample
  generator of $\Pic(X)$.
\end{itemize}
\end{thm}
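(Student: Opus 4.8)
Set $n=\dim X=2r$; recall $\dim\G(1,r)=2r-2$ and $-K_{\G(1,r)}=(r+1)H|_G$, so $G$ has codimension two in $X$. The hypothesis $H^1(X,\cI_{G/X}(H))=0$ says $H^0(X,H)\to H^0(G,H|_G)$ is onto, hence in the embedding $X\hookrightarrow\P^N$ given by $H$ the subvariety $G$ spans a linear subspace on which it is the Pl\"ucker $\G(1,r)$. Since $\G(1,r)$ is covered by lines of $H|_G$-degree one and the deformations of $G$ dominate $X$, the variety $X$ is covered by lines $\ell$ ($H\cdot\ell=1$); as $H$ is very ample these form an unsplit family. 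All lines of $G$ are numerically proportional (because $\Pic(G)=\Z$), so $d:=\det N_{G/X}\cdot\ell$ is a well defined non\-negative integer (the $G$'s cover $X$, so $N_{G/X}$ is generically globally generated), and $-K_X\cdot\ell=(r+1)+d$.

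Let $\phi:X\to W$ be the nef value morphism of $(X,H)$; by the structure theory of polarized pairs swept out by lines (\cite{BSW}) it is a fiber-type contraction whose extremal ray is spanned by a line. All lines of $G$ lie in a single ray, and $\G(1,r)$ is chain-connected by lines, so as soon as one line of $G$ is $\phi$-contracted the whole of $G$ is contracted to a point; hence $\phi$ contracts every deformation of $G$. Since the $G$'s cover $X$, a general fiber $F$ of $\phi$ contains a copy of $G$, so $\dim F\ge 2r-2$ and $\dim W=2r-\dim F\le 2$. If $\dim W=2$ then $\dim F=2r-2=\dim G$, hence $F=G\iso\G(1,r)$, and $W$ is normal ($\phi_*\cO_X=\cO_W$); this is case \ref{thm:mainthm}.1. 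If $\dim W=1$, a general fiber $F$ is a smooth Fano $(2r-1)$-fold containing $G$ in codimension one; the relevant component of $\Hilb(X)$ parametrising the $G$'s has dimension $\ge 2$ (its universal family has dimension $\ge\dim X=2r$ and fibers over it in $(2r-2)$-folds), which exceeds $\dim W$, so $G$ moves inside $F$. As $\Pic(G)=\Z$ we get $N_{G/F}\iso\cO_G(m)$ with $m\ge0$: if $m>0$ then $G$ is an ample divisor in the smooth variety $F$, contradicting the non-extendability of $\G(1,r)$ for $r\ge4$ (\cite{F2}); if $m=0$ the deformations of $G$ inside $F$ sweep out a finer fibration onto a surface, contradicting $\dim W=1$. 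So the case $\dim W=1$ does not occur.

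It remains to treat $\dim W=0$, i.e. $X$ Fano with $\Pic(X)=\Z H$ and $-K_X=(r+1+d)H$. Restricting to a general line one gets $T_X|_\ell=\cO(2)\oplus\cO(1)^{r-1+d}\oplus\cO^{r-d}$, so the variety of minimal rational tangents $\mathcal C_x\subset\P(T_xX)$ at a general point $x$ has dimension $r-1+d$ and, for every $G\ni x$, contains the Segre-embedded $\P^1\times\P^{r-2}$ which spans the linear subspace $\P(T_xG)$. Integrating the $\P^{r-2}$-ruling of these pieces produces the $\P^{r-1}$'s carried inside $X$ by the $\G(1,r)$'s (their $a$-planes), so $X$ is covered by linearly embedded $\P^{r-1}$'s. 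Analysing how these linear subspaces — equivalently, the overlapping Segre pieces of $\mathcal C_x$ — fit together forces $\mathcal C_x$ itself to be a Segre $\P^1\times\P^{r-1}$, whence $d=1$ and $-K_X=(r+2)H$; then by the recognition of $\G(1,r+1)$ as the unique Fano manifold of Picard number one with this VMRT (alternatively, by adapting the classification of varieties swept out by linear spaces, \cite{sato}, \cite{nov-occ}, to the family of $\P^{r-1}$'s just found) one concludes $X\iso\G(1,r+1)$ with $H$ the ample generator; this is case \ref{thm:mainthm}.2.

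The main obstacle is precisely the Fano case $\dim W=0$: the index $r+1+d$ need not lie in the del Pezzo/Mukai range (it does only for the smallest values of $r$), so one cannot simply quote the classification of Fano manifolds and must instead exploit genuinely the incidence geometry of the family of $\G(1,r)$'s through a point — equivalently, the global behaviour of the $\P^{r-1}$'s they carry — to pin down $d=1$ and to recognise $\G(1,r+1)$. Making the dimension counts in the $\dim W=1$ and $\dim W=0$ steps rigorous (deformations of $G$ confined to a fiber; the $\P^{r-1}$'s covering $X$) also relies, via semicontinuity, on the hypothesis $H^1(X,\cI_{G/X}(H))=0$; the remaining points (normality of $W$, the identification $F=G$ when $\dim W=2$) are routine.
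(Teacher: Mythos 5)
Your reduction to the Fano case is essentially the paper's: the nef value morphism contracts every deformation of $G$ (the numerical hypothesis of the relevant lemma is automatic here since $c\geq 0$), the surface base gives case \ref{thm:mainthm}.1, the curve base is excluded because $G$ would be an effective, hence ample, divisor on a general fiber of Picard number one, contradicting the non-extendability of $\G(1,r)$ for $r\geq 4$, and the case $c=0$ is excluded because a trivial normal bundle forces $\rho(X)>1$. Up to some sketchiness (e.g.\ your $m=0$ sub-case in the curve branch), this part is sound.

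The genuine gap is the Fano case, which you yourself flag as ``the main obstacle'' and then resolve only by assertion. Two concrete problems. First, nothing in your argument bounds $d=\det(N_{G/X})\cdot\ell$: the splitting $T_X|_\ell=\cO(2)\oplus\cO(1)^{r-1+d}\oplus\cO^{r-d}$ already presupposes that $N_{G/X}|_\ell$ has splitting type in $\{0,1\}$, i.e.\ $d\leq 2$, and this is exactly what must be proved. The hypothesis $H^1(X,\cI_{G/X}(H))=0$, which you relegate to a semicontinuity remark, is the tool for this: it forces $\cI_{G/\P^N}$ to be generated by quadrics, hence $N^*_{G/X}(2)$ to be globally generated, bounding the splitting type by $2$ and $c$ by $4$; the cases $c=3,4$ are then killed because $\cO(2)$ cannot be a summand of a subsheaf of $N_{L/\P^N}\cong\cO(1)^{\oplus n-r+1}$. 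Second, ``analysing how the Segre pieces fit together forces $\cC_x=\P^1\times\P^{r-1}$'' is not an argument: a priori the $\G(1,r)$'s through $x$ contribute a union of Segre $\P^1\times\P^{r-2}$'s inside $\cC_x$ with no control on how they overlap. The paper gets $\cC_x$ by an entirely different chain: it computes $N_{L/X}$ for a general $(r-1)$-plane $L\subset G$ (uniform-bundle classification plus the quadric-generation bound), extracts an $\cO(1)$ summand to produce a smooth hyperplane section $X'$ containing $L$, applies Novelli--Occhetta to identify $X'$ with a linear section of $\G(1,r+1)$ --- which is what finally pins down $c=1$ --- and only then recognizes $\cC_x$ as the Segre variety because its hyperplane section is a variety of minimal degree and $\cC_x$ is smooth. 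Your fallback of ``adapting Sato or Novelli--Occhetta to the family of $\P^{r-1}$'s'' cannot work either: $\P^{r-1}\subset X^{2r}$ has codimension $r+1$, outside the range $\dim X\leq 2t+1$ of both results; one needs the $\P^{r}$'s supplied by $\cC_x\cong\P^1\times\P^{r-1}$ to land in the boundary case $\dim X=2\dim M$ of Sato's theorem, together with the computation of $N_{M/X}|_{L'}$.
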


For the sake of completeness we have also dealt with the cases $r=2$
and $r=3$ which are special since $\mathbb{G}(1,2)$ is linear and
$\mathbb{G}(1,3)$ is a quadric. In fact, our methods allow us to
classify $n$-dimensional polarized varieties $(X,H)$ swept out by
codimension two linear spaces or quadrics (see Propositions
\ref{prop:r=s=2} and \ref{prop:r=s=3}). Observe that the very
ampleness of the polarization is not needed in our approach, whereas
it was necessary in the results of Sato, \cite{sato}, Kachi-Sato,
\cite{KS}, and Beltrametti-Ionescu, \cite{BI}, quoted above.

The structure of the paper is the following. In Section
\ref{sec:resultados} we expose some background material, including a
result by Beltrametti, Sommese and Wi\'sniewski on the nef value
morphism of polarized varieties swept out by lines, that will be the
starting point of our classification. In Section \ref{sec:grass} we
obtain a structure result on polarized varieties $(X,H)$ swept out by
grassmannians of lines, based on their nef value morphisms. We also
study the normal bundle to those grassmannians in $X$. Section
\ref{sec:linquad} deals with the classification of polarized varieties
swept out by codimension two linear spaces and quadrics. In the case
of quadrics the problem of finding out which Del Pezzo varieties
contain quadrics appears. Our solution goes through computing the
possible normal bundles to quadrics embedded in certain weighted
projective spaces.  Finally, we finish the proof of Theorem
\ref{thm:mainthm} in Section \ref{sec:proof}. Note that this is the
only place where we need very ampleness of the polarization. The proof
involves a study of the normal bundle in $X$ to a linear subspace of
$G$ of maximal dimension, as well as the result by Novelli and
Occhetta cited above. With this ingredients at hand we study the
variety of tangents to lines in $X$ passing through a general point,
and the proof boils down to using Sato's Theorem \cite[Main
Thm.]{sato}.

\medskip

\noindent{\bf Acknowlegements:} 
%This paper is dedicated to 
%Andrew J. Sommese in his sixtieth birthday. 
We would like to thank Miles Reid for his valuable suggestions
concerning weighted projective spaces.

\subsection{Conventions and definitions.}\label{notation:nefvalue}
\medskip

We will work over the complex numbers and we will freely use the
notation and conventions appearing in \cite{hartshorne}. When there is
no ambiguity we will denote a line bundle $\cO_X(M)$ on a variety $X$
by $\cO(M)$.

Along the paper a polarized variety will be a pair $(X,H)$ where $X$
is a smooth irreducible projective variety and $H$ is an ample line
bundle on $X$. The {\it nef value} of $(X,H)$ is the minimum number
$\tau$ such that $K_X+\tau H$ is nef but not ample. If $K_X$ is not
nef then $\tau$ is rational and the $\Q$-divisor $K_X+\tau H$ is
semiample. We will denote by $\Phi:X\to Y$ the morphism with connected
fibers determined by $m(K_X+\tau H)$, $m>>0$, and we will call it the
{\it nef value morphism} of $(X,H)$.

We will denote by $(\mathbb{G}(k,n),\cO(1))$ the grassmannian of
linear subspaces of dimension $k$ in $\P^n$ polarized by the ample
generator of its Picard group, and by $(\mathbb{Q}^n, \mathcal{O}(1))$
the smooth quadric of dimension $n$ polarized by the very ample
divisor defining the embedding $\mathbb{Q}^n \subset \P^{n+1}$ as a
hypersurface of degree two.

We will say that $(X,H)$ is a {\it scroll} over a smooth projective
variety $B$ if there exists a vector bundle $E$ on $B$ such that
$X=\mathbb{P}(E)$ and $H$ is the tautological line bundle.

Given an irreducible family $\mathcal{C}$ of rational curves in $X$ we
will say that $X$ is {\it rationally chain connected by the family}
$\mathcal{C}$ if two general points of $X$ can be connected by a chain
of curves of $\mathcal{C}$. We refer to \cite{hwang-survey} and
\cite{keso} for notation and generalities on rational curves and the
variety of minimal rational tangents.

A vector bundle on a projective variety $X$ is called {\it generically
  globally generated} (g.g.g. for brevity) if it is globally generated
at the general point.

%%%%%%%%%%%%%%%%%%%%%%%%%%%%%%%%%%%%%%%%%%%%%%%%%%%%%%%%%%%%

\section{Preliminars}\label{sec:resultados}

We begin by recalling the following well known features of families of
subschemes:

\begin{rem}\label{rmk:swpt} {\rm Let $(X,H)$ be a
    polarized variety. Set $G \subset X$ an irreducible smooth
    subvariety. The universal family parametrized by an irreducible
    component $\cH$ containing $[G]$ of the Hilbert scheme $\Hilb(X)$
    dominates $X$ if and only if the normal bundle of a general
    deformation of $G$ in $\mathcal{H}$ is g.g.g..  For simplicity we
    will say that $\cH$ {\it dominates} $X$, or that $X$ {\it is dominated by a
    family of deformations of} $G$.}
\end{rem}

Given a family $\cH$ of smooth subschemes of $X$, one may wish to
study semicontinuity properties on the normal bundles. In order to do
that we introduce the following notation:

\begin{notn}\label{notation:universalfamily} {\rm Let $(X,H)$ be a
    polarized variety and $[G]$ be a smooth point of a dominating
    component $\cH\subset\Hilb(X)$. Let $I=\{(p,[G']): p \in G',\;
    [G'] \in \cH\}$ be the universal family, $\pi_1$ and $\pi_2$ the
    corresponding projections, $\cH_0$ the open set of smooth points
    of $\cH$ and $I_0=\pi_2^{-1}(\mathcal{H}_0)$. Shrinking $\cH_0$ if
    necessary, we get a diagram of sheaves over $I_0$ with exact rows:
    $$
    \xymatrix@C=2.5pc{0\ar[r]&T_{I_0|\mathcal{H}_0}\ar@{=}[d]\ar[r]&T_{I_0}
      \ar[d]^{d\pi_1}\ar[r]&\pi_2^*T_{\cH_0}\ar[d]\ar[r]&0\\
      0\ar[r]&T_{I_0|\mathcal{H}_0}\ar[r]^{\overline{\pi}_1}&
      \pi_1^*T_X\ar[r]&N_{\cH}\ar[r]&0}
    $$
    where $N_{\cH}:=\coker(\overline{\pi}_1)$ verifies
    $N_{\cH}|_{G'}\cong N_{G'/X}$, for $[G']\in\cH_0$.}
\end{notn}

Varieties swept out by lines have been extensively studied. We will
make use of a particular result in this direction, extracted from a
more detailed exposition due to Beltrametti, Sommese and Wi\'sniewski
(cf.  \cite{BSW}).

\begin{thm}[\cite{BSW}~Thms.~2.1-2.5]\label{thm:BSW}
  Let $(X,H)$ be a polarized variety such that for each point $x \in
  X$ there exists a rational curve $\ell \subset X$ with $x\in \ell$
  and $H\cdot \ell=1$. With the notation of \ref{notation:nefvalue} we
  get:
\begin{itemize}
\item[\ref{thm:BSW}.1.] Either $\Phi$ contracts $\ell$ (equivalently,
  $\tau=-K_X\cdot \ell$), or $-K_X\cdot \ell+1\leq\tau\leq n+K_X\cdot
  \ell+2$ and, in particular, $-K_X\cdot \ell\leq(n+1)/2$.
\item[\ref{thm:BSW}.2.] If $-K_X\cdot \ell\geq (n+1)/2$ then
  $-K_X\cdot\ell=(n+1)/2$ unless $\Phi$ is the contraction of the
  extremal ray $\R_+[\ell]$.
\end{itemize}
\end{thm}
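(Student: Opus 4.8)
The plan is to read everything off the nef value morphism $\Phi\colon X\to Y$. Since $X$ is dominated by rational curves it is uniruled, so $K_X$ is not nef; hence $\tau$ is a positive rational number, $K_X+\tau H$ is semiample (rationality and base point free theorems), and the morphism it defines is $\Phi$, which contracts precisely the curves $C$ with $(K_X+\tau H)\cdot C=0$, with $K_X+\tau H=\Phi^{*}A$ for an ample $\Q$-Cartier divisor $A$ on the normal variety $Y$. The first step is elementary: since $H\cdot\ell=1$ one has $(K_X+\tau H)\cdot\ell=\tau-(-K_X\cdot\ell)$, so nefness of $K_X+\tau H$ gives $\tau\geq -K_X\cdot\ell$, with equality if and only if $\Phi$ contracts $\ell$. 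That is the first alternative of \ref{thm:BSW}.1, so from now on I assume $\Phi$ does not contract $\ell$, i.e. $\tau>-K_X\cdot\ell$.

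For the upper bound $\tau\leq n+K_X\cdot\ell+2$ I would choose a $K_X$-negative extremal ray $R=\R_+[C_R]$ with $(K_X+\tau H)\cdot C_R=0$ (it exists because $K_X+\tau H$ is nef but not ample) and its contraction $\operatorname{cont}_R\colon X\to Z$, through which $\Phi$ factors. First one checks $\operatorname{cont}_R$ is of fiber type: otherwise the general deformation of $\ell$, which dominates $X$ and is not contracted, would be forced into the exceptional locus (in the remaining case one works directly with $\Phi$). Let $F$ be a general fiber. Then $(K_X+\tau H)|_F$ and $\det N_{F/X}$ are trivial, so adjunction yields $-K_F=\tau\,H|_F$; in particular $F$ is Fano, polarized by $H|_F$. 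On the other hand the family of lines through a general point $x\in F$ has dimension at least $-K_X\cdot\ell-2$, and none of them lies in $F$ (else $\tau=-K_X\cdot\ell$); their images sweep out a subvariety of $Y$ through $\Phi(x)$ of dimension at least $-K_X\cdot\ell-1$, so $\dim F=n-\dim Y\leq n+K_X\cdot\ell+1$. Kobayashi--Ochiai applied to $-K_F=\tau\,H|_F$ then gives $\tau\leq\dim F+1\leq n+K_X\cdot\ell+2$. Since moreover $\tau>-K_X\cdot\ell$ strictly, the two inequalities force $2(-K_X\cdot\ell)\leq n+1$, i.e. $-K_X\cdot\ell\leq(n+1)/2$. (The sharper statement $-K_X\cdot\ell+1\leq\tau$ follows once $\tau$ is known to be an integer, e.g. whenever some $K_X+\tau H$-trivial extremal ray is spanned by a curve of $H$-degree one; the strict inequality already suffices for the stated consequence.)

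For \ref{thm:BSW}.2, suppose $-K_X\cdot\ell\geq(n+1)/2$. If $\Phi$ did not contract $\ell$ we would be in the second alternative of \ref{thm:BSW}.1, which forces $-K_X\cdot\ell\leq(n+1)/2$; hence either $-K_X\cdot\ell=(n+1)/2$, and we are done, or $\tau=-K_X\cdot\ell$ and $\Phi$ contracts $\ell$. In the latter case the general fiber $F$ of $\Phi$ satisfies $-K_F=(-K_X\cdot\ell)\,H|_F$, so its pseudo-index is at least $-K_X\cdot\ell$, which is large compared with $\dim F\leq n-\dim Y$; by the classification of Fano manifolds of large pseudo-index (Cho--Miyaoka--Shepherd-Barron and Kobayashi--Ochiai in the extremal cases, Wi\'sniewski's bound otherwise) one gets $\rho(F)=1$, so $\Phi$ contracts a single extremal ray, which is then forced to be $\R_+[\ell]$; that is, $\Phi$ is the contraction of $\R_+[\ell]$. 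The value $(n+1)/2$ is exactly the threshold at which Wi\'sniewski's inequality breaks down, which accounts for that one exception.

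The step I expect to be hardest is the fiber dimension estimate $\dim F\leq n+K_X\cdot\ell+1$, i.e. controlling how much the lines through a point of $F$ push up $\dim Y$, together with the reduction of the extremal contraction to the fiber type case; in \ref{thm:BSW}.2 one also has to be careful with the Fano pseudo-index bounds precisely at the borderline value $(n+1)/2$.
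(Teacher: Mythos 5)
This statement is imported verbatim from \cite{BSW} (Thms.~2.1--2.5); the paper offers no proof of it, so your reconstruction can only be judged on its own terms. Your skeleton for \ref{thm:BSW}.1 is the right one (nefness of $K_X+\tau H$ on $\ell$ gives $\tau\geq -K_X\cdot\ell$ with equality iff $\ell$ is contracted; adjunction plus Kobayashi--Ochiai on a general fiber gives the upper bound), but the two steps you flag as hardest are exactly where the argument is not yet a proof. Your reduction to a fiber-type contraction does not work as stated: if $\operatorname{cont}_R$ is birational, the lines are simply not contracted by it, so nothing forces them into the exceptional locus and there is no contradiction. Likewise, the assertion that the lines through a general $x\in F$ have image of dimension $\geq -K_X\cdot\ell-1$ in $Y$ is precisely what needs proof: a priori $\operatorname{Locus}(\ell,x)$ could meet the fiber through $x$ in a positive-dimensional set even if no single line lies in it. Both gaps are closed by the same missing ingredient, the Ionescu--Wi\'sniewski lemma for unsplit families (a family of $H$-degree-one rational curves cannot degenerate, and every curve contained in $\operatorname{Locus}(\ell,x)$ is numerically proportional to $[\ell]$); granting it, $\Phi|_{\operatorname{Locus}(\ell,x)}$ is finite when $\ell$ is not contracted, so $\dim Y\geq -K_X\cdot\ell-1\geq 1$, $\Phi$ is automatically of fiber type, and $\dim F\leq n+K_X\cdot\ell+1$ follows. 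You should state and use that lemma explicitly rather than the exceptional-locus heuristic.

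Part \ref{thm:BSW}.2 has a more serious gap. When $-K_X\cdot\ell>(n+1)/2$ and $\dim Y=0$, the general fiber is $X$ itself and its index is only bounded below by something of the order $\dim F/2+1$, which is exactly the borderline of Wi\'sniewski's theorem where $\rho=1$ fails (e.g.\ $\P^m\times\P^m$ or $\P^2\times\P^3$ with $\cO(1,1)$, which are genuine exceptional cases in \cite{BSW}, as the paper's own Remark~\ref{rmk:n=4,5} acknowledges by invoking [BSW, 2.5.3] and listing $\P^2\times\P^3$ separately). So ``classification of Fanos of large pseudo-index'' does not close the argument at this threshold. Moreover, even when $\dim Y>0$, the implication ``$\rho(F)=1$ for the general fiber $\Rightarrow\Phi$ is an elementary contraction'' is false in general (components of reducible special fibers need not be proportional to the general fiber class, as already happens for conic bundles). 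The standard route here is instead to compare, via Wi\'sniewski's fiber-dimension inequality, the loci of two distinct extremal rays in the face contracted by $K_X+\tau H$ and derive a contradiction from a dimension count against $\operatorname{Locus}(\ell,x)$; that is the argument you would need to supply.
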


As an application we get the following lemma.

\begin{lemma}\label{lem:picard}
  Let $(X,H)$ be a polarized variety. Assume that $X$ is dominated by
  deformations of a smooth subvariety $G\subset X$. Assume further
  that $G$ is rationally connected by a family $\mathcal{C}$ of
  rational curves of $H$-degree one. Set $c:=\det(N_{G/X})\cdot \ell$
  for $[\ell]\in \mathcal{C}$. If $c>K_G \cdot \ell+(n+1)/2$
  then, with the notation of \ref{notation:nefvalue}, $\tau=-K_G \cdot
  \ell+c$, $\Phi$ is the contraction of the extremal ray $\R_+[\ell]$
  and $\Phi(G)$ is a point.
\end{lemma}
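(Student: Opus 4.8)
The plan is to reduce the statement to Theorem~\ref{thm:BSW}, applied to a curve $\ell$ with $[\ell]\in\mathcal{C}$. The key input is the normal bundle sequence $0\to T_G\to T_X|_G\to N_{G/X}\to 0$: passing to determinants gives $\cO_X(-K_X)|_G\iso\cO_G(-K_G)\tensor\det(N_{G/X})$, and intersecting with $\ell$ yields
\[
-K_X\cdot\ell=-K_G\cdot\ell+\det(N_{G/X})\cdot\ell=-K_G\cdot\ell+c .
\]
Hence the hypothesis $c>K_G\cdot\ell+(n+1)/2$ says precisely that $-K_X\cdot\ell>(n+1)/2$; in particular $K_X\cdot\ell<0$, so $K_X$ is not nef and the nef value morphism $\Phi$ of \ref{notation:nefvalue} is a well defined morphism with connected fibers.

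\medskip

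Next I would verify the hypothesis of Theorem~\ref{thm:BSW}, namely that every point of $X$ lies on a rational curve of $H$-degree one. A general member $\ell$ of $\mathcal{C}$ is free in $G$, since $\mathcal{C}$ is a covering family of rational curves of minimal ($H$-)degree, so $N_{\ell/G}$ is globally generated; as $N_{G/X}$ is g.g.g. (Remark~\ref{rmk:swpt}), $N_{G/X}|_\ell$ is globally generated too, and an extension of globally generated bundles on $\P^1$ is globally generated, so $N_{\ell/X}$ is globally generated. Deforming $\ell$ inside the universal family of deformations of $G$ is then unobstructed, and since that family dominates $X$ one obtains a family of $H$-degree one rational curves in $X$ with dense locus; passing to a projective compactification of the parameter space, and noting that an $H$-degree one $1$-cycle for the ample $H$ is a smooth rational curve, these curves cover all of $X$. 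Now Theorem~\ref{thm:BSW}.1 applies to $\ell$: its second alternative would force $-K_X\cdot\ell\leq(n+1)/2$, which is false, so the first alternative holds, i.e.\ $\Phi$ contracts $\ell$ and $\tau=-K_X\cdot\ell=-K_G\cdot\ell+c$. Since moreover $-K_X\cdot\ell>(n+1)/2$ strictly, Theorem~\ref{thm:BSW}.2 forces $\Phi$ to be the contraction of the extremal ray $\R_+[\ell]$.

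\medskip

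Finally I would show $\Phi(G)$ is a point. Since $\mathcal{C}$ is irreducible, all its members are numerically equivalent in $X$ to $\ell$, hence lie in the ray $\R_+[\ell]$ and are contracted by $\Phi$. Because $G$ is rationally connected by $\mathcal{C}$, two general points of $G$ are joined by a chain of curves of $\mathcal{C}$ and therefore have the same image under $\Phi$; the set of such pairs is closed in $G\times G$ and dense, hence equals $G\times G$, so $\Phi|_G$ is constant and $\Phi(G)$ is a single point. I expect the only delicate step to be the covering assertion of the second paragraph---that the $H$-degree one curves reach every point of $X$, not merely a dense subset---while the computation with the normal bundle sequence, the reading off of the two inequalities from Theorem~\ref{thm:BSW}, and the final argument with the extremal ray are formal.
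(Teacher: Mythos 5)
Your proposal is correct and follows essentially the same route as the paper: compute $-K_X\cdot\ell=-K_G\cdot\ell+c$ via the normal bundle sequence, check that the free curves of $\mathcal{C}$ deform to sweep out all of $X$ (the paper phrases this as nefness of $f^*T_X$ plus the properness of the family of $H$-degree one curves, which you spell out), and then read off the conclusion from Theorem~\ref{thm:BSW} and the rational chain connectedness of $G$. The only cosmetic remark is that a limit $H$-degree one $1$-cycle need only be irreducible, reduced and rational, not necessarily smooth, but this does not affect the argument.
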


\begin{proof} Being $f:\P^1\to\ell$ the normalization morphism,
  the hypotheses imply that $f^*N_{G/X}$ is g.g.g. and hence it is
  nef. But $G$ is dominated by $\mathcal{C}$, hence $f^*T_G$ is nef.
  It follows that $f^*T_X$ is nef too and, equivalently, $X$ is swept
  out by rational curves of $H$-degree one. Since
  $-K_X\cdot\ell=-K_{G}\cdot\ell+c>(n+1)/2$, Theorem \ref{thm:BSW}
  implies that $\tau$ equals $-K_{G}\cdot\ell+c$ and $\Phi:X\to Y$ is
  the (fiber-type) contraction of the extremal ray $\R_+[\ell]$.
  Finally, since $G$ is rationally chain connected by the family
  $\mathcal{C}$, its image by $\Phi$ is a point.
\end{proof}

%%%%%%%%%%%%%%%%%%%%%%%%%%%%%%%%%%%%%%%%%%%%%%%%%

\section{Varieties swept out by grassmannians}\label{sec:grass}

Let us start this section by fixing the setup:

\begin{set}\label{setup}{\rm Let $(X,H)$ be a polarized variety of dimension
    $n=2r$. We assume that $X$ is dominated by a family of
    deformations of $G\cong \mathbb{G}(1,r)$, $r \geq 2$. Assume
    further that $\cO(1)=H|_G$ generates $\Pic(G)$ and write
    $\det(N_{G/X})\cong cH|_G$, $c\in\Z$.  }
\end{set}

\begin{rem}\label{rem:isodef}
  {\rm Note that the vanishing $H^1(\G(1,r),T_{\G(1,r)})=0$ (obtained
    by Littlewood-Richardson formula, for instance) implies that the
    general deformation of $G$ inside $X$ is isomorphic to $\G(1,r)$.}
\end{rem}

We begin by applying the results of the previous section to a
polarized variety $(X,H)$ verifying the hypotheses we have just
imposed.

\begin{prop}\label{prop:struc} Let $(X,H)$ be as in \ref{setup}. Then either
\begin{itemize}
\item[\rm \ref{prop:struc}.1.] $Y$ is a normal surface, the general
  fiber of $\Phi$ is isomorphic to $G$ and $N_{G/X}\cong \cO^{\oplus
    2}$, or
\item[\rm \ref{prop:struc}.2.] $Y$ is a smooth curve, the general
  fiber of $\Phi$ is either $\mathbb{P}^3$ or a smooth $5$-dimensional
  quadric and $N_{G/X}\cong \cO\oplus \cO(1)$, or
\item[\rm \ref{prop:struc}.3.] $Y$ is a smooth curve, the general
  fiber of $\Phi$ is $\P^5$ and $N_{G/X}\cong\cO\oplus \cO(2)$, or
\item[\rm \ref{prop:struc}.4.] $Y$ is a point, {\rm Pic}$(X)=\Z H$ and
  $-K_X=(r+1+c)H$.
\end{itemize}
\end{prop}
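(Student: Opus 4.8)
The plan is to run the dichotomy of Theorem \ref{thm:BSW} applied to the rational curves of $H$-degree one sweeping out $X$, and to read off the four cases from the possible values of the nef value $\tau$ and the dimension of $Y$. First I would fix a minimal rational curve $\ell$ in a sweeping family on $G\cong\G(1,r)$; since $G$ is rationally connected by lines, $H\cdot\ell=1$ and $K_G\cdot\ell=-(r+1)$ (the index of $\G(1,r)$ is $r$, and $\cO(1)$ is the Plücker polarization, so $-K_G=r\,\cO(1)$ — wait, I must be careful: the index of $\G(1,r)$ equals $r$, hence $-K_G\cdot\ell=r$). By the exact sequence $0\to T_G\to T_X|_G\to N_{G/X}\to 0$ and Setup \ref{setup} we get $-K_X\cdot\ell=r+c$. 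As in the proof of Lemma \ref{lem:picard}, since $N_{G/X}$ is g.g.g. (Remark \ref{rmk:swpt}) and $T_G$ is nef along $\ell$, the bundle $f^*T_X$ is nef, so $X$ itself is swept out by the deformations of $\ell$; Theorem \ref{thm:BSW}.1 then forces $-K_X\cdot\ell=r+c\leq (n+1)/2=(2r+1)/2$, i.e. $c\leq 0$ or $c=1$ only in borderline fashion — more precisely $r+c\le r+\tfrac12$, so $c\le 0$; but the borderline analysis of \ref{thm:BSW}.2 must be handled separately and will in fact produce the cases with $c=1,2$.

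Next I would split according to whether $\Phi$ contracts $\ell$. If $\Phi$ does not contract $\ell$, then by \ref{thm:BSW} we are in the range $-K_X\cdot\ell+1\le\tau$ and $-K_X\cdot\ell\le (n+1)/2$; combined with $-K_X\cdot\ell=r+c$ this gives $c\le 0$. Then $N_{G/X}$ is a nef bundle of rank $2$ on $\G(1,r)$ with $\det=c\le 0$, hence $c=0$ and $N_{G/X}$ has trivial determinant and is nef, so $N_{G/X}\cong\cO^{\oplus2}$ (a nef bundle with trivial determinant on a rationally connected variety is trivial; alternatively use that $\G(1,r)$ has no nontrivial nef bundles of trivial determinant for $r\ge2$). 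Since $N_{G/X}\cong\cO^{\oplus2}$, the deformations of $G$ in $\cH$ form an honest $2$-dimensional family with trivial normal bundle, so through a general point there is a $2$-parameter family of copies of $G$; a standard argument (the base $\cH$ near $[G]$ maps birationally, via the incidence correspondence, onto a surface, and the $G$'s are the fibers) shows $\Phi$ is a fibration onto a normal surface $Y$ with general fiber $\cong G$. This is case \ref{prop:struc}.1. (Here I should check that the nef value morphism is precisely this fibration: $K_X+\tau H$ restricted to $G$ is $(-r+\tau)\cO(1)$, which is trivial exactly when $\tau=r=-K_G\cdot\ell$; that this common value is the global nef value follows because any extremal contraction must contract the $G$'s once $N_{G/X}$ is trivial.)

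If on the other hand $\Phi$ contracts $\ell$, then $\tau=-K_X\cdot\ell=r+c$ and $\Phi$ is the contraction of the extremal ray $\R_+[\ell]$; moreover $G$ is rationally chain connected by the $\ell$'s so $\Phi(G)$ is a point and $\dim Y\le 1$. Here the main work is: classify the fiber-type Fano–Mori contractions of a smooth $2r$-fold whose fibers contain a codimension-two $\G(1,r)$ on which $H$ restricts to the generator. If $Y$ is a point then $\Pic(X)=\Z H$ by the Picard-number-one structure of an elementary fiber-type contraction to a point, and $-K_X=\tau H=(r+1+c)H$ once one pins down that the length $\tau$ of the ray equals $-K_G\cdot\ell+c=r+1+c$ (note the correction: $-K_X\cdot\ell=-K_G\cdot\ell+c$, and $-K_G\cdot\ell$ is the coindex-normalized value $r$, so $\tau=r+c$ — the statement's exponent $r+1+c$ must come from the convention $-K_{\G(1,r)}=(r+1)\cO(1)$? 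No: I will simply adopt whatever index normalization the paper fixes in Section \ref{sec:resultados} and carry it through consistently). If $\dim Y=1$, let $F$ be a general fiber: it is a smooth Fano $(2r-1)$-fold, $\rho(F)=1$, containing $G\cong\G(1,r)$ as a smooth hyperplane-type (codimension one in $F$) subvariety with $H|_G$ the generator. By the extendability/adjunction theory cited in the introduction (the classification of Fano varieties of large index, plus Fujita's results on $\G(1,r)$ not being extendable for $r\ge4$), the only possibilities are $F\cong\P^{2r-1}$ or $F$ a quadric $\Q^{2r-1}$; matching dimensions $2r-1\in\{3,5\}$ when the grassmannian is genuinely a grassmannian forces $r=2,3$, giving $\P^3$, $\Q^5$, $\P^5$ with $c=1$ or $c=2$ according to whether $-K_F\cdot\ell$ is $2r=4$ ($\P^3$), $2r-1=5$ ($\Q^5$), or $2r=6$ ($\P^5$) — i.e. reading $c$ off $\det N_{G/X}=\det N_{G/F}=(-K_F+K_G)|_G$. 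This yields cases \ref{prop:struc}.2 and \ref{prop:struc}.3, with the stated normal bundles $\cO\oplus\cO(1)$ and $\cO\oplus\cO(2)$ coming from $N_{G/X}=N_{G/F}\oplus\cO$ together with $N_{G/F}=\cO_{\P^3}(1)$ resp.\ $\cO_{\Q^5}(1)$ resp.\ $\cO_{\P^5}(2)$.

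\textbf{Main obstacle.} The delicate point is the case $\dim Y=1$: one must show that a smooth Fano $(2r-1)$-fold $F$ of Picard number one containing $\G(1,r)$ as a codimension-one subvariety with the induced polarization the generator must be projective space or a quadric, and in particular $r\le 3$. This is where I expect to invoke Fujita's nonextendability of $\G(1,r)$ for $r\ge4$ and the structure of special Fano varieties (varieties with a "nice" divisor), together with bounding the index of $F$ via $\ell$. One must also be careful about the borderline case of Theorem \ref{thm:BSW}.2 (when $-K_X\cdot\ell=(n+1)/2$ exactly and $\Phi$ is not the ray contraction) and rule it out or fold it into case \ref{prop:struc}.1 — this forces $r+c=(2r+1)/2$, impossible for integers, so in fact that borderline never occurs, which is a small but necessary check. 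Finally, identifying the nef-value morphism with the geometric fibration in case \ref{prop:struc}.1, and the precise computation of the trivial-determinant nef bundle $N_{G/X}$, require the vanishing $H^1(\G(1,r),\cO)=0$ and $\Pic(\G(1,r))=\Z$, both standard.
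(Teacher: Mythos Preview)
Your overall strategy (feed the lines on $G$ into Theorem~\ref{thm:BSW}/Lemma~\ref{lem:picard} and then branch on $\dim Y$) is the paper's strategy, but there is a concrete numerical error that derails your case decomposition. The index of $\G(1,r)$ is $r+1$, not $r$: one has $-K_{\G(1,r)}=(r+1)\cO(1)$, so $-K_G\cdot\ell=r+1$ and $-K_X\cdot\ell=r+1+c$. You had this right on your first pass and then talked yourself out of it; the ``$r+1+c$'' in the statement is not a convention, it is the correct value. With the right number, the inequality $-K_X\cdot\ell>(n+1)/2$ reads $r+1+c>r+\tfrac12$, which holds for every $c\ge 0$. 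Hence Lemma~\ref{lem:picard} applies unconditionally: $\Phi$ \emph{always} contracts $\ell$ (and $G$), and one gets $\dim Y\le 2$ in one stroke. Your ``Case~A'' ($\Phi$ does not contract $\ell$) is therefore vacuous, and your claim ``$\dim Y\le 1$'' in ``Case~B'' is wrong---it should be $\dim Y\le 2$.

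This matters because your handling of the surface case is placed in the wrong branch. In the paper, case~\ref{prop:struc}.1 arises inside the framework where $\Phi$ contracts $G$: when $\dim Y=2$, the general fiber of $\Phi$ has dimension $2r-2=\dim G$, contains $G$, and is connected, so it \emph{equals} $G$; the normal bundle of a smooth fiber is trivial, giving $N_{G/X}\cong\cO^{\oplus 2}$ for free. You instead try to deduce $N_{G/X}\cong\cO^{\oplus 2}$ first (from ``nef with trivial determinant'') and then manufacture a surface fibration from the Hilbert scheme, only afterwards arguing that it agrees with $\Phi$. That detour is unnecessary and the identification step is not justified in your sketch. Once you fix the index, the whole thing collapses to the paper's clean trichotomy $\dim Y\in\{0,1,2\}$. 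Your analysis of the $\dim Y=1$ case is essentially the paper's: $G$ is an ample divisor on the general fiber $F$, $\Pic(F)=\Z H|_F$, Fujita's nonextendability forces $r\in\{2,3\}$, Kobayashi--Ochiai pins down $F$, and the sequence $0\to N_{G/F}\to N_{G/X}\to\cO\to 0$ splits to give the stated normal bundles.
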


\begin{proof} Since $N_{G/X}$ is g.g.g.,
  then $c \geq 0$. Hence, by Lemma \ref{lem:picard}, $\tau=r+1+c$ and
  $\Phi$ contracts $G$ to a point, which in particular gives $\dim(Y)
  \leq 2$.
  
  If $\dim(Y)=2$, since the fibers of $\Phi$ are connected, the
  general deformation of $G$ coincides with the fiber containing it,
  hence \ref{prop:struc}.1 holds.
  
  If $Y$ is a point, then a multiple of $K_X+\tau H$ is trivial.
  Since $\Phi$ is an elementary contraction, it follows that $X$ is a
  Fano manifold of Picard number $1$. In particular $\Pic(X)$ has no
  torsion, thus $K_X+\tau H$ is trivial too and \ref{prop:struc}.4
  follows.
  
  Thus we are left with the case $\dim(Y)=1$. Let us denote by $F$ the
  general fiber of $\Phi$, that contains a grassmannian $G$.  Applying
  Lemma \ref{lem:picard} to $(F,H|_F)$ and using that the nef value
  morphism $\Phi_F$ coincides with $\Phi|_F$ we obtain that $F$ is a
  Fano manifold whose Picard group is generated by $H|_F$. But $G$
  appears as an effective, and hence ample, divisor on $F$. In
  particular $c\geq 1$. On the other side it is classically known that
  this is only possible (cf.  \cite[Theorem~5.2]{F1}) if $r=2,3$. It
  follows that $G$ is either $\mathbb{P}^2$ or a smooth quadric of
  dimension $4$. If the former holds then $F=\mathbb{P}^3$, the
  exact sequence
\begin{equation}\label{normaltothefiber}
0 \to N_{G/F} \to N_{G/X} \to \mathcal{O} \to
0,\end{equation} splits and we get
\ref{prop:struc}.2. If the later holds then $-K_G=4H|_G$. This
implies that $-K_F=(4+c)H|_F$, and applying Kobayashi-Ochiai
characterization of quadrics and projective spaces, \cite{KO},
either $c=1$ and $F$ is a $5$-dimensional quadric, or $c=2$ and
$F$ is isomorphic to $\P^5$. The splitting of the exact sequence
(\ref{normaltothefiber}) concludes the proof.
\end{proof}

\begin{rmk}\label{rmk:hyperqudricfibrations} {\rm Let us remark that
    in the case \ref{prop:struc}.1 when $r=3$ we get smoothness of
    $Y$ when $H$ is very ample or $\Phi$ is equidimensional, see
    \cite[Thm. B]{ABW2} and \cite[(2.3)]{BS}. In fact, $Y$ is
    conjectured to be smooth, see \cite[Conj.  14.2.10]{BSbook}. In
    \ref{prop:struc}.3 or in the first case of \ref{prop:struc}.2, if
    $H$ is very ample, all fibers of $\Phi$ are isomorphic. However
    this is not true in general, as one can see by considering certain
    quadric sections of the Segre embedding of $\P^1 \times \P^5
    \subset \P^{11}$.}
\end{rmk}

At this point, one would like to determine $N_{G/X}$ also in the case
\ref{prop:struc}.4, but this task is not as simple as in the other
cases.  In fact certain restrictions might be imposed in order to get
our classification. However the following lemmas allow us to claim
that $c$ is different from zero.

\begin{lemma}\label{lemma:c=0}  Let $(X,H)$ be as in
  \ref{setup}. If $c=0$ then $N_{G/X}$ is trivial.
\end{lemma}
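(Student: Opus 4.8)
The plan is to exploit that $\G(1,r)$ for $r\ge 2$ is a rational homogeneous space whose tangent bundle is nef and globally generated, and to combine this with the hypothesis $c=0$, which forces $\det N_{G/X}\cong\cO_G$. Since $N_{G/X}$ is g.g.g. (Remark \ref{rmk:swpt}), it is globally generated at the general point; a globally generated vector bundle has nef determinant, and a g.g.g. bundle of rank $2$ with $\det\cong\cO_G$ should have vanishing first Chern class everywhere. First I would restrict $N_{G/X}$ to the minimal rational curves $\ell$ in the family $\cC$ rationally connecting $G$ (the lines of $\G(1,r)$): on each such $\ell\cong\P^1$ the pullback $f^*N_{G/X}$ is g.g.g., hence nef, and $\deg f^*N_{G/X}=\det(N_{G/X})\cdot\ell=cH|_G\cdot\ell=0$. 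A nef vector bundle of degree zero on $\P^1$ is trivial, so $f^*N_{G/X}\cong\cO_{\P^1}^{\oplus 2}$ for the general line $\ell$.

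The second step is to upgrade this fiberwise triviality along a covering family of lines to triviality of $N_{G/X}$ on all of $G$. Here I would use the standard argument for uniform bundles / bundles trivial on a covering family of rational curves: since $f^*N_{G/X}$ is trivial for the general member of $\cC$ and $G$ is rationally connected by $\cC$ (indeed $\G(1,r)$ is swept by its lines), one concludes that $N_{G/X}$ is numerically flat. Concretely, $c_1(N_{G/X})=0$ because it has degree zero on the covering family that generates $N_1(G)$ (the Picard number of $\G(1,r)$ is $1$, so a single class of curves suffices), and likewise $c_2(N_{G/X})=0$: on $\P^1$ a trivial rank-two bundle has $c_2$ restricting to zero, and one can push this to $G$ using that $\G(1,r)$ has the property that a rank-two vector bundle which is trivial on a covering family of lines is itself trivial — this is precisely the kind of statement proven by studying the associated $\P^1$-bundle and Grothendieck's theorem, as in the classification of uniform bundles on rational homogeneous spaces. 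Alternatively, one can invoke that a numerically flat vector bundle on a rationally connected variety (such as $\G(1,r)$, which moreover is simply connected with $H^1(\cO)=0$) is trivial.

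The main obstacle will be the passage from "trivial on the general line" to "globally trivial" done cleanly; the cheap route is to cite that $N_{G/X}$ numerically flat plus $\G(1,r)$ simply connected and $H^1(G,\cO_G)=0$ (so $\Pic^0(G)=0$ and there are no nontrivial unipotent extensions) forces triviality, but one should check the rank-two extension vanishing, i.e. that $\mathrm{Ext}^1_G(\cO_G,\cO_G)=H^1(G,\cO_G)=0$, which indeed holds for grassmannians. So the argument is: (i) $f^*N_{G/X}\cong\cO_{\P^1}^{\oplus 2}$ on the general line, using g.g.g.\ $\Rightarrow$ nef and $\deg=c=0$; (ii) hence $N_{G/X}$ is numerically flat and a successive extension of flat line bundles, but $\Pic(G)$ is torsion-free of rank one and $c_1=0$, so those line bundles are $\cO_G$; (iii) $H^1(G,\cO_G)=0$ kills the extension class, giving $N_{G/X}\cong\cO_G^{\oplus 2}$. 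A subtlety worth flagging: one must ensure the general deformation of $G$ is again $\G(1,r)$ so that the family $\cC$ of lines on it is available with the stated properties; this is exactly Remark \ref{rem:isodef}, so it is already in hand.
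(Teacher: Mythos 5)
Your step (i) is fine: g.g.g.\ restricted to a general line gives a nef bundle of degree $c=0$ on $\P^1$, hence trivial there. The gap is in the passage you yourself flag as the ``main obstacle'': \emph{neither} of the two routes you offer for upgrading ``trivial on the general line'' to ``trivial on $G$'' is valid. The claim that a rank-two bundle trivial on a covering family of lines is itself trivial is false: by the Grauert--M\"ulich theorem, any stable rank-two bundle on $\P^2=\G(1,2)$ with $c_1=0$ and $c_2>0$ is trivial on the \emph{general} line, yet it is nontrivial (and not nef, because of its jumping lines). For the same reason you cannot conclude that $N_{G/X}$ is numerically flat: numerical flatness requires nefness, which must be tested on \emph{all} curves, and triviality on general members of a covering family gives no control on the special ones. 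Your remark that ``on $\P^1$ a trivial rank-two bundle has $c_2$ restricting to zero'' carries no information, since $c_2$ restricts to zero on any curve for dimension reasons; so $c_2(N_{G/X})=0$ is never actually established. In short, after step (i) you discard the global sections provided by the g.g.g.\ hypothesis and try to argue from numerical data alone, and that numerical data is insufficient.

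The g.g.g.\ hypothesis has to be used globally, and then the proof is short: generic global generation of the rank-two bundle $N_{G/X}$ gives two sections whose values at a general point are independent, hence a generically injective map $\cO^{\oplus 2}\to N_{G/X}$. Its determinant is a nonzero section of $\det(N_{G/X})\cong cH|_G\cong\cO_G$ (here $c=0$ and $\Pic(G)=\Z H|_G$), hence nowhere vanishing, so the map $\cO^{\oplus 2}\to N_{G/X}$ is an isomorphism. This is the paper's argument; no lines, rational connectedness, or vanishing of $H^1(G,\cO_G)$ are needed.
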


\begin{proof} Since $N_{G/X}$ is g.g.g.,
  then $\dim(H^0(X,N_{G/X})) \geq 2$. Taking two independent sections
  we get a generically injective morphism $\mathcal{O}^{\oplus 2} \to
  N_{G/X}$. This produces a nonzero global section of the
  $\det(N_{G/X})$. Since $c=0$, this gives $\det(N_{G/X})\cong\cO$ and
  $N_{G/X}\cong\mathcal{O}^{\oplus 2}$.
\end{proof}

\begin{lemma}\label{lemma:trivialnormal} Let $X$ be an irreducible
  smooth projective variety of dimension bigger than or equal to $4$.
  Assume that $X$ contains a codimension two smooth subvariety $G
  \subset X$ such that $b_2(G)=1$ and $H^1(G,\cO)=0$. If
  $N_{G/X}\cong\cO^{\oplus 2}$ then $\rho(X)>1$.
\end{lemma}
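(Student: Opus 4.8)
The plan is to show that if $G$ has trivial normal bundle and $\rho(X)=1$, then the deformations of $G$ inside $X$ contradict the fact that $G$ is positive-dimensional and meets a general point only finitely often, or more directly to produce a nontrivial morphism on $X$ that is impossible when $\rho(X)=1$. The cleanest approach uses the universal family of deformations of $G$ together with the triviality of $N_{G/X}$ to build a divisor on $X$ numerically trivial on the curves sweeping $G$, which forces a fibration-like structure incompatible with Picard number one.

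First I would set up the deformation picture. Since $N_{G/X}\cong\cO^{\oplus 2}$, we have $h^0(G,N_{G/X})=2$ and $h^1(G,N_{G/X})=0$ (using $H^1(G,\cO)=0$, so $H^1(G,\cO^{\oplus2})=0$), hence the Hilbert scheme component $\cH$ through $[G]$ is smooth of dimension $2$ at $[G]$, and $G$ moves in a $2$-dimensional family with no obstructions. Because the family dominates $X$ (Remark \ref{rmk:swpt}, as $N_{G/X}$ is g.g.g.), the universal family $I_0$ of Notation \ref{notation:universalfamily} gives $\pi_1:I_0\to X$ generically finite and $\pi_2:I_0\to\cH_0$ with fibers isomorphic to deformations of $G$, i.e.\ of dimension $n-2$; so $\dim I_0=n$ and $\pi_1$ is generically finite. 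The key point is that through a general point of $X$ pass only finitely many deformations of $G$, and $\dim\cH=2$.

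Next I would exploit triviality of the normal bundle to show two distinct deformations of $G$ are disjoint, or more precisely that the family induces an honest fibration after normalization. Indeed, if $[G'],[G'']\in\cH_0$ are general and meet, then along $G'\cap G''$ the normal directions would have to match up, but a section of $N_{G'/X}\cong\cO^{\oplus 2}$ vanishing somewhere on $G'$ vanishes nowhere (it is a nonzero constant section), so distinct first-order deformations never become tangent; combined with connectedness/properness arguments this shows the general fibers of $\pi_1\circ\pi_2^{-1}$ — equivalently the images $\pi_1(\pi_2^{-1}([G']))$ — are pairwise disjoint for $[G']$ in a dense open subset. Then the rational map $X\dashrightarrow\cH_0$ sending $x$ to the deformation through it is well-defined on a dense open set and has positive-dimensional image (since $\dim\cH=2>0$), giving a dominant rational map from $X$ to a positive-dimensional base whose general fibers are the $G'$'s. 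Resolving, one obtains a surjective morphism from a birational model of $X$ with disconnected — no, with positive-dimensional — fibers and positive-dimensional base, which via the Stein factorization and the fact that $G$ is \emph{not} contracted to a point would force $\rho(X)\geq 2$; here the hypothesis $b_2(G)=1$ enters to rule out the degenerate possibility that $G$ itself already carries the relative Picard number, and $H^1(G,\cO)=0$ ensures the base of the fibration is not an abelian-variety-type obstruction.

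The main obstacle I anticipate is the step that turns "the deformations sweep out $X$ with $2$-dimensional parameter space and trivial normal bundle" into an actual contraction or at least a numerically nontrivial divisor class: one must rule out the possibility that $\pi_1$ is a nontrivial covering, i.e.\ that finitely many deformations pass through a general point, in which case the disjointness argument above is what rescues it — I would argue that triviality of $N_{G/X}$ forces $\pi_1$ to be birational (a general point lies on exactly one deformation), because otherwise two sheets of $I_0$ meeting over a general $x\in X$ would give two deformations $G',G''$ through $x$ and, since they move in the same linear system on neither — here one uses that on $G'$ the subscheme $G'\cap G''$ would be a divisor in $|N_{G'/X}$-section$|=|\cO_{G'}|$, hence empty or all of $G'$, a contradiction. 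Once $\pi_1$ is birational, $I_0$ itself is a birational model of $X$ fibered over the surface $\cH_0$ with fibers the $G'$, contracting each $\cO(1)|_{G'}$-positive curve class, so $\rho(I_0)\geq 1+\rho(\cH_0)\geq 2$ unless these curve classes were already proportional in $I_0$ to something pulled back from $\cH_0$ — impossible since they are contracted by $\pi_2$ — whence $\rho(X)=\rho(I_0)\geq 2$. I would write this last comparison of Picard numbers carefully, as it is the crux.
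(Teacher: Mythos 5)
Your setup --- the two-dimensional unobstructed family $\cH$, the generically finite evaluation map $\pi_1\colon I_0\to X$, and the strategy of producing an effective divisor disjoint from a general member of the family --- matches the paper's. The gap is precisely at the step you identify as the crux: the claim that two distinct members $G',G''$ are disjoint because ``$G'\cap G''$ would be a divisor in $|\cO_{G'}|$, hence empty or all of $G'$.'' The identification of $G'\cap G''$ with the zero locus of a section of $N_{G'/X}$ is valid only for first-order deformations, or for honest divisors (where $D'\cap D''$ is cut out on $D'$ by a section of $\cO_{D'}(D'')\cong N_{D'/X}$). Here $G''$ has codimension two, is locally cut out by two equations, and there is no global section of $\cO_{G'}$ (nor of $N_{G'/X}$) whose zero scheme is $G'\cap G''$; the expected codimension of the intersection in $G'$ is two, not one. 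What triviality of the normal bundle actually gives, via the self-intersection formula ($c_2(N_{G'/X})=0$, hence $[G']^2\cdot H^{n-4}=0$), is that a nonempty intersection cannot be proper: it must contain a \emph{divisor of} $G'$, i.e.\ a codimension-three subvariety of $X$. Excluding this divisorial intersection --- equivalently, excluding $\deg(\pi_1)>1$, which is exactly the situation where two members pass through a general point --- is the hard half of the paper's proof, and your argument assumes it away. The paper handles it by introducing the incidence loci $C_t=\{[G_u]:G_u\cap G_t\neq\emptyset\}$, showing that $\dim C_t\leq 1$ and that $C_t=C_u$ for every $[G_u]\in C_t$ (this is where $b_2(G)=1$ enters: any two effective divisors on $G_t\cong\G(1,r)$ meet), and then deriving a contradiction from how the divisor $D_0=\bigcup_{u\in C_t}G_u$ closes up in $X$: if $\rho(X)=1$, the closure $D$ would have to meet a general $G_s$ along the boundary $D\setminus D_0$, which is a union of codimension-two cycles algebraically equivalent to $G$, forcing $G_s$ to meet one of them in a divisor and contradicting the disjointness of general pairs of members.

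A secondary, repairable point: even in the generically injective case your Picard-number bookkeeping ($\rho(X)=\rho(I_0)\geq 1+\rho(\cH_0)$) is not legitimate as written, since $I_0$ and $\cH_0$ are only quasi-projective and $\rho$ is not a birational invariant. The correct conclusion there is the one you gesture at: for a curve $C\subset\cH_0$, the closure in $X$ of $\bigcup_{u\in C}G_u$ is an effective, nonzero divisor disjoint from a general member $G_s\subset X_0$, hence of intersection number zero with a family of curves covering $X$, hence not ample --- which is impossible if $\rho(X)=1$. But the essential missing idea is the treatment of the case in which distinct members of the family do meet.
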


\begin{proof} The hypotheses imply that
  $\dim(H^0(G,N_{G/X}))=2$ and $H^1(G,N_{G/X})\\=0$, hence $[G]$ is a
  smooth point of a $2$-dimensional component $\cH\subset\Hilb(X)$.
  Furthermore, with the notation introduced in
  \ref{notation:universalfamily} we may assume that there exists an
  open subset $\cH_0 \subset \cH$ such that any element $[G_t] \in
  \mathcal{H}_0$ corresponds to a smooth projective subvariety $G_{t}
  \subset X$ for which $N_{G_{t}/X}\cong\mathcal{O}^{\oplus
    2}$, $d\pi_1$ is an isomorphism and $b_2(G_t)=1$.  This provides a
  finite morphism $f=\pi_1|_{I_0}:\mathcal{I}_0 \to X_0$ from the
  universal family over $\mathcal{H}_0$ onto an open subset $X_0
  \subset X$.
  
  If $f$ is generically one to one, then the Picard number of $X$
  cannot be one. Thus we may assume that $\deg(f)>1$.
  
  Take a general $[G_t] \in \mathcal{H}_0$ and define: $C_t=\{[G_u]
  \in \mathcal{H}_0:\; G_u \cap G_t \ne \emptyset \}$. The subscheme
  $C_t\subset\cH_0$ is nonempty by the hypothesis on $\deg(f)$ and it
  is different from $\cH_0$ since $f$ is a local isomorphism at every
  point. Given $[G_u] \in C_t$ we claim that $C_t \subseteq C_u$. In
  fact since $c_2(N_{G_t/X})=0$, the self intersection formula tells
  us that $G_u\cap G_t$ is a divisor on $G_t\cong \G(1,r)$. Hence,
  given any $[G_v]\in C_t$ we get $G_v\cap G_u\neq\emptyset$. The same
  argument leads to the equality $C_t=C_u$.
  
  As an abuse of notation let $C_t$ stand now for the union of the one
  dimensional components of $C_t$. Now define $D_0 =\cup_{u \in C_t}
  G_{u}$ which is a divisor on $X_0$ and observe that for the general
  $[G_s] \in \mathcal{H}_0$ we have $D_0 \cap G_{s} =\emptyset$. If
  $\rho(X)=1$ is one then the closure $D$ of $D_0$ in $X$ would meet
  $G_s$ so that $(D \setminus D_0) \cap G_s \neq \emptyset$ for any
  $[G_s] \in \mathcal{H}_0$. But $D \setminus D_0$ is a finite union
  of codimension two subvarieties of $X$, any of them algebraically
  equivalent to $G$.  Then, the general $G_s$ would meet an
  irreducible component of $D \setminus D_0$ in a divisor of $G_s$,
  contradicting the fact that $G_s \cap G_{s'}= \emptyset$ for general
  $s,s' \in \cH_0$.
\end{proof}

%%%%%%%%%%%%%%%%%%%%%%%%%%%%%%%%%%%%%%%%%%%%%%%%%

\section{Varieties swept out by codimension two linear spaces and quadrics}\label{sec:linquad}

The main result of this paper, Theorem \ref{thm:mainthm}, classifies,
under certain assumptions, varieties swept out by deformations of
$\mathbb{G}(1,r)$, with $r\geq 4$.  For the sake of completeness, we
have addressed in this section the cases $r=2,3$, for which some ad hoc
arguments are needed. They allow us to classify
$n$-dimensional varieties swept out by codimension two linear spaces
and quadrics. These problems have been already addressed by many
authors, see for instance \cite{sato}, \cite{KS} and \cite{BI}. Note
that they allow higher codimension but they assume very ampleness of
the polarization, which is not necessary in our case.

An analogue of Proposition \ref{prop:struc} already allow us to study
varieties swept out by codimension $2$ linear spaces.  We have skipped
the proof since it follows verbatim \ref{prop:struc}. Note that we
need to use that projective spaces are rigid
($H^1(\P^{n-2},T_{\P^{n-2}})=0$) and that the only varieties
containing linear spaces as ample divisors are linear spaces
themselves. Note also that the only quadric containing codimension $2$
linear spaces is $\Q^4$.

\begin{prop}\label{prop:r=s=2} Let $(X,H)$ be a polarized variety 
  of dimension $n \geq 4$.  Suppose that $X$ is dominated by a family
  of deformations of $L\cong\P^{n-2}$, with $H|_L\cong\cO(1)$. Then,
  with the notation of \ref{notation:nefvalue}, either:
\begin{itemize}
\item[\rm \ref{prop:r=s=2}.1.] $Y$ is a normal surface and the general
  fiber of $\Phi$ is $\P^{n-2}$, or
\item[\rm \ref{prop:r=s=2}.2.] $Y$ is a smooth curve and the general
  fiber of $\Phi$ is $\P^{n-1}$, or
\item[\rm \ref{prop:r=s=2}.3.] $(X,H)=(\P^n, \mathcal{O}(1))$, or
\item[\rm \ref{prop:r=s=2}.4.] $(X,H)=(\Q^4,\cO(1))$.
\end{itemize}
\end{prop}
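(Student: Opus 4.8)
The plan is to follow the blueprint of Proposition \ref{prop:struc} almost word for word, replacing the grassmannian $G\cong\G(1,r)$ by the linear space $L\cong\P^{n-2}$ and using the rigidity $H^1(\P^{n-2},T_{\P^{n-2}})=0$ in place of Remark \ref{rem:isodef}. Since $X$ is dominated by deformations of $L$, the normal bundle $N_{L/X}$ is g.g.g., so $\det(N_{L/X})\cong\cO(c)$ with $c\ge 0$; moreover $L$ is rationally connected by lines, so Lemma \ref{lem:picard} applies with $K_L\cdot\ell=-(n-1)$, giving $\tau=n-1+c$, that $\Phi$ is the contraction of the extremal ray $\R_+[\ell]$, and that $\Phi(L)$ is a point. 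In particular $\dim Y\le 2$.

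First I would dispose of the easy range of $\dim Y$. If $\dim Y=2$, connectedness of the fibers forces the general deformation of $L$ to be a fiber, so \ref{prop:r=s=2}.1 holds. If $\dim Y=0$, then $K_X+\tau H$ is torsion; since $\Phi$ is elementary, $X$ is Fano of Picard number one, hence torsion-free, so $-K_X=(n-1+c)H$, and Kobayashi--Ochiai \cite{KO} identifies $(X,H)$: the index being $n-1+c\ge n-1$ forces $(X,H)=(\P^n,\cO(1))$ (if $c=0$ one also has to rule out a smooth quadric; but $\Q^n$ contains no codimension-two linear spaces except in $\Q^4$, and in any case its Picard number would not match once we recall a codimension-two $\P^{n-2}\subset X$ produces plenty of divisors — alternatively observe $c\ge 1$ here since $L$ moves in a family of dimension at least $2n-2>n$). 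The one subtlety is that for $\dim Y=0$ one wants $(X,H)=(\P^n,\cO(1))$ rather than $(\Q^4,\cO(1))$, so the quadric case must be extracted from the positive-dimensional $Y$ analysis below.

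Then comes the case $\dim Y=1$. Write $F$ for the general fiber; it contains $L\cong\P^{n-2}$. Applying Lemma \ref{lem:picard} to $(F,H|_F)$ — using that $\Phi_F=\Phi|_F$ — shows $F$ is Fano with $\Pic(F)=\Z\,H|_F$, and $L$ sits inside $F$ as an effective, hence ample, divisor; so $c\ge 1$. Now the classical fact that the only smooth projective varieties containing a linear space $\P^{n-2}$ as an ample divisor are the linear spaces themselves (and one has $L=\Q^{n-2}\subset\Q^{n-1}$ only when $n-2=2$, i.e.\ the $\Q^4$ exception enters here) forces $F=\P^{n-1}$, giving $-K_F=(n-1+c)H|_F$ with $\dim F=n-1$, so $c=1$ and $\Pic(F)=\Z\,H|_F$; this is \ref{prop:r=s=2}.2. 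The low-dimensional degeneracy $L=\P^2\subset\Q^3$, i.e.\ $n=4$, produces $F=\Q^3$, $-K_F=(n-1+c)H|_F=3H|_F$, consistent with $c=1$; but $\Q^3$ fibered over a curve with $\Pic=\Z H|_F$ does not globally extend the way the projective bundle does unless $X=\Q^4$, which is case \ref{prop:r=s=2}.4, appearing exactly when the nef value morphism is the contraction $\Q^4\to\mathrm{pt}$ — so this is really the $\dim Y=0$ exception in disguise.

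The main obstacle, and the reason the proof is not quite verbatim, is keeping the bookkeeping of the quadric $\Q^4$ straight: a codimension-two linear space behaves like a grassmannian of lines only away from $r=2$, $n=4$, where $\P^2$ is simultaneously a linear space and a quadric surface, so the exceptional item \ref{prop:r=s=2}.4 has to be isolated by hand. Concretely I would argue that when $n=4$ and $\dim Y\le 1$, the general fiber $F$ of $\Phi$ containing $L=\P^2$ as an ample divisor is either $\P^3$ (recovering \ref{prop:r=s=2}.2 with the extra value $c=1$) or $\Q^3$; in the latter subcase, since $\Q^3$'s only ample divisors isomorphic to $\P^2$ are hyperplane sections and $\Q^3$ extends to an ample divisor only inside $\P^4$ or $\Q^4$, and since here $\dim Y$ cannot be $1$ (a fibration in quadrics over a curve would not contain a single $\P^2$ that deforms to dominate $X$ while staying inside fibers — this forces $\dim Y=0$), one gets $X=\Q^4$ and $H=\cO(1)$. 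Everything else is an application of Lemma \ref{lem:picard}, Kobayashi--Ochiai, and Fujita's classification of varieties with linear or quadric ample divisors, exactly as in Proposition \ref{prop:struc}.
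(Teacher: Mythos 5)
Your overall skeleton is the right one and is exactly what the paper intends (it states that the proof follows that of Proposition~\ref{prop:struc} verbatim, using rigidity of $\P^{n-2}$, Fujita's theorem on linear ample divisors, and the fact that the only quadric containing codimension-two linear spaces is $\Q^4$). However, your placement and derivation of the exceptional case \ref{prop:r=s=2}.4 is genuinely wrong. You try to extract $(\Q^4,\cO(1))$ from the $\dim Y\geq 1$ analysis via a general fiber $F\cong\Q^3$ containing $L=\P^2$ as an ample divisor. This branch is empty: a smooth quadric $\Q^m$ contains linear subspaces of dimension at most $\lfloor m/2\rfloor$, so $\Q^3$ contains no planes at all, and your repeated premise that ``$\P^2$ is simultaneously a linear space and a quadric surface'' is false ($\Q^2\cong\P^1\times\P^1\not\cong\P^2$; the relevant coincidences behind the paper's low cases are $\G(1,2)\cong\P^2$ and $\G(1,3)\cong\Q^4$, not $\P^2\cong\Q^2$). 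In the $\dim Y=1$ branch Fujita's theorem forces $F=\P^{n-1}$ with no exception, which is \ref{prop:r=s=2}.2.

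The quadric actually lives in the $\dim Y=0$ branch, which you dismiss too quickly. There $X$ is Fano with $\Pic(X)=\Z H$ and $-K_X=(n-1+c)H$; your assertion that ``index $n-1+c\geq n-1$ forces $(X,H)=(\P^n,\cO(1))$'' misreads Kobayashi--Ochiai, which gives $\P^n$ only for index $n+1$ (i.e.\ $c=2$) and gives $\Q^n$ for index $n$ (i.e.\ $c=1$). One must first rule out $c=0$: your reason (``$L$ moves in a family of dimension at least $2n-2>n$'') is unsupported and false as stated; the paper's route is Lemmas~\ref{lemma:c=0} and~\ref{lemma:trivialnormal}, which show $c=0$ would force $N_{L/X}\cong\cO^{\oplus 2}$ and hence $\rho(X)>1$, contradicting $\dim Y=0$. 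Then $c=2$ yields $(\P^n,\cO(1))$, while $c=1$ yields $(\Q^n,\cO(1))$, which contains a $\P^{n-2}$ only when $n=4$; this is precisely how $(\Q^4,\cO(1))$ enters as case \ref{prop:r=s=2}.4. With these corrections the argument closes.
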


\begin{rmk}\label{rmk:n=3} {\rm The hypothesis $n \geq 4$ in
    \ref{prop:r=s=2} is needed in order to get the bound $K_L \cdot
    \ell+(n+1)/2<0$ that allows us to apply Lemma \ref{lem:picard}.
    If $n=3$ and $c=0$ these arguments do not work. Nevertheless we
    can apply basic results of adjunction theory, see \cite[Section
    1]{I}, to describe this case.  If $(X,H)$ is not
    $(\P^3,\mathcal{O}(1))$, $(\mathbb{Q}^3, \mathcal{O})$ or a scroll
    over a curve, then $K_X+2H$ is nef and so $\tau=2$. Hence, either
    $(X,H)$ is a Del Pezzo threefold, or a quadric fibration over a
    smooth curve, or a scroll over a surface.}
\end{rmk}

In the case of quadrics, reasoning as above we obtain the following:

\begin{prop}\label{prop:r=s=3} Let $(X,H)$ be a polarized variety of 
  dimension $n \geq 6$.  Suppose that $X$ is dominated by a family of
  deformations of $L\cong\Q^{n-2}$, with $H|_L\cong\cO(1)$. Then,
  with the notation of \ref{notation:nefvalue}, either:
\begin{itemize}
\item[\rm \ref{prop:r=s=2}.1.] $Y$ is a normal surface and the general
  fiber of $\Phi$ is $\Q^{n-2}$, or
\item[\rm \ref{prop:r=s=2}.2.] $Y$ is a smooth curve and the general
  fiber of $\Phi$ is either $\P^{n-1}$ or $\Q^{n-1}$, or
\item[\rm \ref{prop:r=s=2}.3.] $(X,H)=(\P^n,\cO(1))$, or
\item[\rm \ref{prop:r=s=2}.4.] $(X,H)=(\Q^n, \mathcal{O}(1))$, or
\item[\rm \ref{prop:r=s=2}.5.] $X$ is a Del Pezzo variety of Picard
  number $1$ and $H$ is the ample generator of $\Pic(X)$.
\end{itemize}
\end{prop}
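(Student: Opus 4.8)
The plan is to mimic the proof of Proposition \ref{prop:struc} as closely as possible, replacing the grassmannian $\G(1,r)$ by the quadric $\Q^{n-2}$ and tracking the extra cases that arise because quadrics are less rigid than grassmannians. First I would apply Lemma \ref{lem:picard} to $(X,H)$: the family $\mathcal{C}$ of lines on $L\cong\Q^{n-2}$ has $H$-degree one and rationally connects $L$, and $K_L\cdot\ell = -(n-2)$, so with $c:=\det(N_{L/X})\cdot\ell\geq 0$ (from g.g.g.) the inequality $c>K_L\cdot\ell+(n+1)/2$ reads $c>-(n-2)+(n+1)/2=(5-n)/2$, which holds for every $c\geq 0$ as soon as $n\geq 6$ (this is exactly why $n\geq 6$ is assumed here, as opposed to $n\geq 4$ for linear spaces). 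Hence $\tau=-K_L\cdot\ell+c=(n-2)+c$, the nef value morphism $\Phi:X\to Y$ is the fiber-type contraction of $\R_+[\ell]$, and $\Phi(L)$ is a point, so $\dim Y\leq 2$.

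Next I would split into the three cases $\dim Y=2,1,0$. If $\dim Y=2$, connectedness of fibers forces the general deformation of $L$ to be a full fiber, giving case \ref{prop:r=s=2}.1. If $Y$ is a point, then $K_X+\tau H$ is numerically trivial and $\Phi$ is elementary, so $X$ is Fano of Picard number one; since a quadric embeds in $X$ as an effective, hence ample, divisor, Fujita's classification of varieties containing a quadric as an ample divisor (\cite[Theorem~5.2]{F1}, together with \cite{KO}) leaves only $(\P^n,\cO(1))$, $(\Q^n,\cO(1))$, or a Del Pezzo variety of Picard number one with $H$ the ample generator — cases \ref{prop:r=s=2}.3, \ref{prop:r=s=2}.4, \ref{prop:r=s=2}.5. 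The remaining case $\dim Y=1$ is the one where I expect the real work: let $F$ be the general fiber, a smooth Fano with $\Pic(F)$ generated by $H|_F$ (apply Lemma \ref{lem:picard} to $(F,H|_F)$, noting $\Phi_F=\Phi|_F$), containing $L\cong\Q^{n-2}$ as an ample divisor, so $c\geq 1$. By \cite[Theorem~5.2]{F1} the pairs $(F,L)$ with a quadric as ample divisor of the stated kind are: $F=\P^{n-1}$ with $L$ a quadric hypersurface (then $-K_L=(n-2)H|_L$, $-K_F=nH|_F$, forcing $c=2$), or $F=\Q^{n-1}$ (then $-K_F=(n-1)H|_F$, forcing $c=1$). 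In either case the conormal sequence $0\to N_{L/F}\to N_{L/X}\to\cO\to 0$ must be handled; since $N_{L/F}$ is a line bundle on $\Q^{n-2}$ and $H^1(\Q^{n-2},N_{L/F}^\vee)=H^1(\Q^{n-2},\cO(-c))=0$ for $n-2\geq 3$, the sequence splits and $N_{L/X}\cong\cO\oplus\cO(c)$, yielding general fiber $\P^{n-1}$ or $\Q^{n-1}$ — case \ref{prop:r=s=2}.2.

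The main obstacle I anticipate is the $\dim Y=1$ analysis: unlike the grassmannian case, where Fujita's theorem pins down $r\in\{2,3\}$, here $\Q^{n-2}$ is an ample divisor in both $\P^{n-1}$ and $\Q^{n-1}$, so one must genuinely treat both subcases and argue — via $\Pic(F)=\Z H|_F$ and the Kobayashi–Ochiai characterization \cite{KO} — that no other Fano $F$ of the right dimension admits $\Q^{n-2}$ as an ample divisor generating the Picard group. A secondary subtlety is checking that the cohomology vanishing needed to split the normal bundle sequence genuinely uses $n\geq 6$ (so that $\dim L=n-2\geq 4$, comfortably ensuring $H^1(\Q^{n-2},\cO(-c))=0$), and that in the Picard-number-one case one correctly invokes Fujita's list rather than re-deriving it. Everything else is a routine transcription of the argument in Proposition \ref{prop:struc}, which is why the authors say the proof "reasons as above."
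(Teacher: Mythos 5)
Your overall strategy --- transcribe the proof of Proposition \ref{prop:struc}, run Lemma \ref{lem:picard} with $K_L\cdot\ell=-(n-2)$, and split on $\dim Y$ --- is exactly what the paper means by ``reasoning as above,'' and your treatment of the cases $\dim Y=2$ and $\dim Y=1$ is essentially correct: the computation $(5-n)/2<0$ for $n\geq 6$, the identification of the general fiber via the extendability of $\Q^{n-2}$ as an ample divisor only in $\P^{n-1}$ or $\Q^{n-1}$, and the values $c=2,1$ are all right. (A cosmetic slip: splitting $0\to N_{L/F}\to N_{L/X}\to\cO\to 0$ requires $\Ext^1(\cO,N_{L/F})=H^1(\Q^{n-2},\cO(c))$, not $H^1(\Q^{n-2},\cO(-c))$; both vanish, and the splitting is not needed for the statement anyway.)

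The genuine gap is in the case $\dim Y=0$. You write that ``a quadric embeds in $X$ as an effective, hence ample, divisor'' and invoke Fujita's classification of varieties carrying a quadric as an ample divisor --- but $L\cong\Q^{n-2}$ has codimension \emph{two} in $X$, so it is not a divisor and this argument does not apply; taken literally it would also only ever return $\P^{n-1}$ and $\Q^{n-1}$ and could never produce the Del Pezzo case \ref{prop:r=s=2}.5. The correct route, parallel to \ref{prop:struc}.4, is: $\Phi$ is an elementary contraction to a point, so $X$ is Fano with $\Pic(X)=\Z H$ and $-K_X=\tau H=(n-2+c)H$, and one must bound $c$. The case $c=0$ is excluded because Lemmas \ref{lemma:c=0} and \ref{lemma:trivialnormal} would force $\rho(X)>1$, contradicting the elementary contraction to a point; and $c\leq 3$ because the index of an $n$-dimensional Fano manifold is at most $n+1$. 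Kobayashi--Ochiai \cite{KO} then yields $(\P^n,\cO(1))$ for $c=3$, $(\Q^n,\cO(1))$ for $c=2$, and for $c=1$ the index is $n-1$, i.e.\ $X$ is a Del Pezzo variety with $H$ the ample generator. Your proposal establishes neither $c\geq 1$ nor $c\leq 3$, and without these bounds the trichotomy of cases \ref{prop:r=s=2}.3--\ref{prop:r=s=2}.5 does not follow.
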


The classification will be completed by determining which Del Pezzo
varieties may be swept out by codimension two quadrics:

\begin{prop}\label{prop:delpezzo}
  Let $X$ be a Del Pezzo variety of dimension $n \geq 4$ and
  $\Pic(X)=\Z H$. If $X$ contains a $(n-2)$-dimensional smooth quadric
  $\mathbb{Q}^{n-2}$ of $H$-degree $2$, then $X$ is isomorphic to a linear
  section of $\mathbb{G}(1,4)$.
\end{prop}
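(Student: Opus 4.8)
The plan is to pin down the embedding of the Del Pezzo variety $X$ inside an ambient weighted projective space, and then use the presence of the codimension two quadric $Q = \mathbb{Q}^{n-2}\subset X$ to compute the normal bundle $N_{Q/X}$, forcing $X$ to be (a linear section of) $\mathbb{G}(1,4)$. Recall that Del Pezzo varieties (Fano manifolds with $-K_X=(n-1)H$, $\Pic(X)=\mathbb Z H$) are completely classified by Fujita (\cite{F1}); the relevant cases with $n\geq 4$ of index $n-1$ and coindex $3$ are realized as anticanonically embedded varieties in a weighted projective space $\P(1^a,2,\dots)$, the most symmetric being the linear sections of $\mathbb{G}(1,4)\subset\P^9$. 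So the first step is to recall Fujita's list and reduce, on $H$-degree grounds, to a short list of candidate $X$'s, each sitting in an explicit $\P:=\P(1^{N_0},2^{N_1},\dots)$ with $X$ cut out by known equations.

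Next I would study $Q$ as a subvariety not only of $X$ but of the ambient weighted projective space $\P$. Since $H|_Q=\cO(1)$ generates $\Pic(Q)$ for $n-2\geq 3$ (and for $n-2=2$ we treat the two rulings separately), and since $H^{\otimes 2}|_Q$ is the hyperplane class of the quadric hypersurface, the restriction map tells us that the linear system $|H|$ restricted to $Q$ realizes $Q$ as a quadric hypersurface in a linear $\P^{n-1}\subset\P$. From $0\to N_{Q/X}\to N_{Q/\P}\to N_{X/\P}|_Q\to 0$ together with the Euler-type sequence for $X\subset\P$ and the conormal sequence, I would compute the Chern classes of $N_{Q/X}$; in particular $\det N_{Q/X}=cH|_Q$ with $c=\deg(N_{Q/X})\cdot\ell$ computable from $-K_X=(n-1)H$, $-K_Q=(n-2)H|_Q$ and adjunction, giving $c = (n-1)-(n-2) = 1$ independently of the candidate. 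The key numerical point is then a rank-two vector bundle $N_{Q/X}$ on $Q=\mathbb{Q}^{n-2}$ with $\det=\cO(1)$ which is g.g.g. (by Remark \ref{rmk:swpt}, since $X$ is dominated by deformations of $Q$ — note one must first check this hypothesis is inherited, or simply assume $X$ swept out); such a bundle must be $\cO\oplus\cO(1)$ for $n-2\geq 3$, and this splitting plus the self-intersection/adjunction data rigidifies the geometry.

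The decisive step is to translate "$X$ contains $\mathbb{Q}^{n-2}$ with these invariants" into an equation-level statement eliminating every candidate except linear sections of $\mathbb{G}(1,4)$. Here I would argue that $Q\subset X$ spans a hyperplane section $X\cap\P^{N-1}$ that is itself a lower-dimensional Del Pezzo containing a codimension one quadric; by induction on $n$ this pushes us toward $X=\mathbb{G}(1,4)\cap\Lambda$. Alternatively — and this is probably cleaner — one computes directly that the only weighted projective space $\P(1^a,2^b,\dots)$ in Fujita's list whose generic anticanonical complete intersection can contain a linearly embedded quadric hypersurface of the required degree is $\P^9$ with $X$ a linear section of the Plücker-embedded $\mathbb{G}(1,4)$: the weight-$2$ variables obstruct a codimension two quadric of $H$-degree $2$, since such a $Q$ forces the defining equations to be reducible or forces extra linear relations, collapsing $X$ onto a linear section of $\mathbb{G}(1,4)$. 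The main obstacle is exactly this case-by-case normal-bundle and equation bookkeeping in the weighted projective spaces (the paper signals as much, thanking Miles Reid for advice on weighted projective spaces); in particular one must handle the low index cases where $H^{\otimes 2}$, not $H$, is the natural very ample class, and the $n=4$ case $Q=\mathbb{Q}^2=\P^1\times\P^1$ where $\Pic(Q)$ has rank two needs a separate, slightly delicate argument comparing the two rulings of $Q$ with the lines of $X$.
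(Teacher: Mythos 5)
Your setup coincides with the paper's --- Fujita's list of Del Pezzo varieties of Picard number one (cubic hypersurface, $(2,2)$ complete intersection, $X_4\subset\P(1^{n+1},2)$, $X_6\subset\P(1^n,2,3)$, and linear sections of $\G(1,4)$) together with the normal bundle sequence $0\to N_{Q/X}\to N_{Q/\P}\to N_{X/\P}|_Q\to 0$ --- but the step that actually does the work is missing. The only computation you carry out, $\det N_{Q/X}=H|_Q$ via adjunction, is satisfied by every candidate on the list (it holds on a linear section of $\G(1,4)$ as well), so it eliminates nothing; and your further claim that a generically globally generated rank-two bundle on $\Q^{n-2}$ with $c_1=1$ must split is both unavailable (the proposition assumes only that $X$ \emph{contains} a quadric, not that it is dominated by deformations of one) and false in low dimensions (twisted spinor bundles on $\Q^3$ and the universal bundles on $\Q^4\cong\G(1,3)$ are indecomposable, globally generated, of rank two with $c_1=1$). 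The paper's actual mechanism is different: it computes $N_{Q/\P}$ for a degree-two quadric spanning a linear $\P^{n-1}$ inside each ambient $\P$ (for instance $\cO(H)^2\oplus\cO(2H)$ against $N_{X/\P}|_Q=\cO(3H)$ for the cubic, and $\cO(2H)^2\oplus\cO(3H)$ against $\cO(6H)$ for $X_6$) and observes that no surjection $N_{Q/\P}\to N_{X/\P}|_Q$ exists, so the normal bundle sequence itself is impossible. In particular your heuristic that the weight-two variables are what obstruct the quadric cannot be the right explanation: Types I and II live in ordinary projective spaces and must be excluded by the same token.

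The second gap is that in the weighted cases the computation of $N_{Q/\P}$ is not routine bookkeeping, and you leave precisely this part undone: one must check that $Q$ avoids $\Sing(\P)$, exhibit $Q$ as an explicit complete intersection in $\P$ (which requires the $n$ degree-one sections embedding $Q$ to be linearly independent), and separately rule out the degenerate case in which $Q$ would sit inside a smaller $\P(1^{n-i},2,3)$; the paper handles the latter by an explicit elimination argument showing that the projection forgetting $x_1$ is not injective on the quadric. Your alternative route by induction on hyperplane sections also stalls exactly in Types III and IV, where $H$ is not very ample and there is no hyperplane to cut with. So the skeleton is right, but the content of the proof --- the case-by-case exclusion of Types I through IV --- is asserted rather than proved.
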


This fact is based on Fujita's classification of Del Pezzo varieties
(cf. \cite[8.11, p. 72]{F2}). We are interested in those of Picard
number $1$, which are:
\begin{itemize}
\item[I.] $X\cong X_3\subset\P^{n+1}$ is a hypersurface of degree
  three, or
\item[II.] $X\cong X_{2,2}\subset\P^{n+2}$ is the complete
  intersection of two quadrics, or
\item[III.] $X\cong X_4$ is a degree four hypersurface in the weighted
  projective space $\P(1^{n+1},2)$, or
\item[IV.] $X\cong X_6\subset\P(1^n,2,3)$ is a degree 6 hypersurface,
  or
\item[V.] $X$ is isomorphic to a linear section of $\G(1,4) \subset
  \P^9$.
\end{itemize}

\begin{proof}[Proof of Proposition~\ref{prop:delpezzo}] For each case 
  denote by $\P$ the corresponding ambient space. We will discard
  Types I to IV by showing that $\Q:=\mathbb{Q}^{n-2}$ does not meet
  the singular locus of $\P$ and that the normal bundle $N_{\Q/\P}$ of
  a quadric $\Q$ in $\P$ of $H$-degree $2$ does not admit a surjective
  morphism onto $N_{X/\P}|_{\Q}$. More concretely, we claim that the
  pair $\big(N_{\Q/\P},N_{X/\P}|_{\Q}\big)$ takes the values
  $\big(\cO(H)^2\oplus\cO(2H),\cO(3H)\big)$,
  $\big(\cO(H)^3\oplus\cO(2H),\cO(2H)^2\big)$,
  $\big(\cO(H)\oplus\cO(2H)^2,\cO(4H)\big)$,
  $\big(\cO(2H)^2\oplus\cO(3H),\cO(6H)\big)$ for Types I to IV,
  respectively. In Types I and II the line bundle $H$ is very ample
  and the statement is immediate. We will show how to discard Type IV,
  being Type III completely analogous. The following argument was
  suggested to us by M. Reid.
  
  An embedding $\Q\subset\P(1^n,2,3)=\P$ is given by sections
  $s_1,\dots,s_n\in H^0(\Q,\cO(1))$, $t\in H^0(\Q,\cO(2))$ and $u\in
  H^0(\Q,\cO(3))$.
  
  If $s_1,\dots,s_n$ are linearly independent then they generate the
  homogeneous coordinate ring of $\Q$. Choosing appropriate
  weighted homogeneous coordinates $x_1,\dots,x_n,y,z$ in $\P$ we may
  assume that $\Q\subset\P$ is a complete intersection defined by the
  following equations:
\begin{equation}\label{eq:quadric}
(x_1^2=x^2_2+\dots +x_n^2,\; y=x_1f_1+f_2,\; z=x_1g_2+g_3),
\end{equation}
being $f_i$ and $g_i$ homogeneous polynomials of degree $i$ in
$x_2,\dots,x_n$.  This implies that the normal bundle takes the
desired form.

If $s_1,\dots,s_n$ are not linearly independent then we may assume
that $\Q$ lies on a subvariety of equations $s_1=\dots=s_i=0$,
isomorphic to $\P(1^{n-i},2,3)$. This case may be ruled out by showing
that the quadric defined by the equations (\ref{eq:quadric}) cannot be
projected isomorphically already into $\P(1^{n-1},y,z)$ (eliminating
the variable $x_1$). In fact the image of $\Q$ by this projection is
defined by equations
$$
\rank\left(\begin{array}{cccc}y-f_2&z-g_3&(x^2_2+\dots+
    x^2_n)f_1&(x^2_2+ \dots+
    x^2_n)g_2\\f_1&g_2&y-f_2&z-g_3\end{array}\right)\leq 1,
$$
and the set of points having positive dimensional inverse image,
defined by equations $f_1=g_2=y-f_2=z-g_3=0$, is nonempty.
\end{proof}

\begin{rmk}\label{rmk:otherpezzo}
  {\rm Note that if we skip the hypothesis $\Pic(X)=\Z$ in the previous
  proposition, there is just another possibility, namely
  $X\cong\P^2\times\P^2$.}
\end{rmk}

\begin{rmk}\label{rmk:n=4,5}{\rm Similarly to \ref{rmk:n=3}, let us point
    out that the hypothesis $n \geq 6$ of \ref{prop:r=s=3} is needed in order
    to apply Lemma \ref{lem:picard}. We observe that if $n=5$ and $c
    \ne 0$ then \ref{lem:picard} applies and the same conclusion as in
    \ref{prop:r=s=3} follows. If $n=5$, $c=0$ then, by \cite[Thm.
    2.5]{BSW}, either $X=\P^2 \times \P^3$, or $\tau=3$, and
    $\Phi:X \to Y$ contracts $\mathbb{Q}^3$. If $\dim(Y)=2$ then $X$
    is as in \ref{prop:r=s=3}.1. If $\dim(Y)=1$ then, for the general
    fiber $F$, we get $-K_F \cdot \ell=3$ so that $\Phi_F=\Phi|_F$ is
    the contraction of a extremal ray. Hence $X$ is as in
    \ref{prop:r=s=3}.2. If $\dim(Y)=0$ then $\rho(X)>1$ by
    \ref{lemma:trivialnormal}. Hence $\Phi$ is not the contraction of
    a extremal ray and \cite[2.5.3]{BSW} together with \cite{Wis}
    describe $(X,H)$ precisely.
    
    If $n=4$ the situation is slightly different since $\mathbb{Q}^2$
    contains two different families of lines. Nevertheless adjunction
    theory arguments (cf.  \cite[Section 1]{I}, \cite{BSbook}) and the
    understanding of the nef morphism of $(X,H)$ and of its first
    reduction (cf.  \cite{BSW}) allow us to give a more explicit
    description of $(X,H)$. In fact if $X$ is not $\P^4$, $\Q^4$ or a
    scroll over a curve, then $K_X+3H$ is nef and in particular $\tau
    \leq 3$ and $c\leq 1$.  Now, with the exception of the cases in
    which $(X,H)$ is either Del Pezzo (see Remark
    \ref{rmk:otherpezzo}), or a quadric fibration onto a curve, or a
    scroll over a surface, we may take the first reduction $(X',H')$,
    that verifies that $K_{X'}+3H'$ is ample. In particular $\tau
    <3$.  Now, by \cite[2.1]{BSW} and what we have proved, $2 \leq
    \tau <3$.  Moreover, $\tau=2$ by \cite[Thm. 7.3.4]{BSbook}.
    Hence, the nef value morphism $\Phi:X' \to Y'$ of $(X',L')$
    contracts $\mathbb{Q}^2$. If $\dim(Y')=2$ then the general fiber
    is $\Q^2$.  If $\dim(Y')=1$ then the general fiber $F$ has 
    $\rho(F)>1$ and $F$ is one
    of the list of \cite{Wis}. If $\dim(Y)=0$ then $X'$ is a Fano
    variety of index two, classically called {\it Mukai varieties},
    described in \cite{clm} and \cite{mukai1},
    \cite{mukai2}.  }
\end{rmk}

%%%%%%%%%%%%%%%%%%%%%%%%%%%%%%%%%%%%%%%%%%%%%%%%%%%%%%%%%%%%

\section{Proof of the main theorem}\label{sec:proof}
We are ready to prove Theorem \ref{thm:mainthm}. In view of
Proposition \ref{prop:struc} and Lemma \ref{lemma:trivialnormal} we
may assume that $\Pic(X)=\Z H$ and that $\det(N_{G/X})=cH|_G$ with
$c>0$. Since we are assuming that $H$ is very ample, we will consider
$X$ as a subvariety of $\P^N:=\P(H^0(X,H))$ and study linear
subvarieties of $\P^N$ contained in $X$.

In fact our proof involves describing the normal bundle $N_{L/X}$ of a
general $(r-1)$-dimensional linear subspace $L\subset G$. Note that
$\rank(N_{L/X})=\dim(L)+2$, hence, even if we check that $N_{L/X}$ is
uniform, we cannot infer that it is homogeneous. In fact, it has been
conjectured that uniform vector bundles on $\P^s$ of rank smaller than
$2s$ are homogeneous (cf. \cite{BE}), and homogeneous vector bundles
on $\P^s$ are classified for rank smaller than or equal to $s+2$ (cf.
\cite[3.4, p.~70]{OSS}, \cite{ellia}). However the conjecture has been
confirmed only for rank smaller than or equal to $s+1$, and some extra
cases for $s$ small. Nevertheless, in our particular case, we may
prove the following:

\begin{lemma}\label{lemma:normal} In the conditions of 
  Theorem \ref{thm:mainthm}, assume further that $\rho(X)=1$, and let
  $L\subset G$ be a general $(r-1)$-dimensional linear subspace. Then
  either
\begin{itemize}
\item[\ref{lemma:normal}.1.]  $c=1$ and
  $N_{L/X}=T_L(-1)\oplus\mathcal{O}(1) \oplus \mathcal{O}$, or
\item[\ref{lemma:normal}.3.] $c=2$ and
  $N_{L/X}=T_L(-1)\oplus\mathcal{O}(1)^{\oplus 2}$.
\end{itemize}
\end{lemma}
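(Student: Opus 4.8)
The plan is to study $N_{L/X}$ through the normal bundle sequence
\[
0\to N_{L/G}\to N_{L/X}\to N_{G/X}|_L\to 0 .
\]
First I would identify $N_{L/G}$. For $r\geq 4$ the maximal linear subspaces of $G\cong\mathbb{G}(1,r)$ are the $\alpha$-planes $\{[W]:p\subseteq W\}\cong\mathbb{P}(V/p)$, where $V$ is the underlying $(r+1)$-dimensional vector space and $p\subseteq V$ is a fixed line. Since the tangent space to $G$ at $[W]$ is $\Hom(W,V/W)$ and that of $L$ is $\Hom(W/p,V/W)$, the fibre of $N_{L/G}$ at $[W]$ is $\Hom(p,V/W)\cong V/W$; as a bundle on $L=\mathbb{P}(V/p)$ one has $V/W\cong\bigl((V/p)\otimes\mathcal{O}_L\bigr)/\mathcal{O}_L(-1)$, so the Euler sequence gives $N_{L/G}\cong T_L(-1)$. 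In particular $N_{L/G}$ is uniform, with splitting type $(1,0^{\,r-2})$ on every line of $L$, and $\det N_{L/X}=\mathcal{O}_L(c+1)$.

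Next I would restrict to a general line $\ell\subseteq L$. Because the $\alpha$-planes partition the lines of $G$, such an $\ell$ is a general line of $G$, hence — the deformations of $G$ covering $X$ and the lines covering $G$ — a free line of $X$; being of $H$-degree one it has minimal anticanonical degree $-K_X\cdot\ell=r+1+c$ (using $-K_X=(r+1+c)H$ from \ref{prop:struc}.4), so $N_{\ell/X}\cong\mathcal{O}(1)^{\,r-1+c}\oplus\mathcal{O}^{\,r-c}$, and in particular $c\leq r$. Plugging $N_{\ell/L}\cong\mathcal{O}(1)^{\,r-2}$ into the normal bundle sequence of $\ell\subseteq L\subseteq X$ and using $\Hom(\mathcal{O}(1),\mathcal{O})=0$ on $\mathbb{P}^1$, I get $N_{L/X}|_\ell\cong\mathcal{O}(1)^{\,c+1}\oplus\mathcal{O}^{\,r-c}$ and hence $N_{G/X}|_\ell\cong\mathcal{O}(a)\oplus\mathcal{O}(b)$ with $0\leq a\leq b$, $a+b=c$. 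Moreover, since $L$ moves inside $G$ and $G$ inside $X$, the deformations of $L$ cover $X$, so by Remark \ref{rmk:swpt} $N_{L/X}$ is generically globally generated, and a general $L$ passes through a general point of $G$, so $N_{G/X}|_L$ is generically globally generated as well. Finally, here is where I would use that $H$ is very ample and $H^1(X,\mathcal{I}_{G/X}(H))=0$: then $G\subseteq X$ is linearly embedded as the full Plücker $\mathbb{G}(1,r)$ and $L\subseteq X$ is a genuine linear subspace of $\mathbb{P}^N=\mathbb{P}(H^0(X,H))$, so $N_{L/X}\hookrightarrow N_{L/\mathbb{P}^N}\cong\mathcal{O}_L(1)^{\oplus(N-r+1)}$; consequently the splitting type of $N_{L/X}$ on every line of $L$ consists of integers $\le 1$.

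The heart of the matter — and the step I expect to be the real obstacle — is to promote the previous remarks to: $N_{G/X}|_L$ is a direct sum of two line bundles on $L\cong\mathbb{P}^{r-1}$, equivalently (being of rank $2<r-1$) it is uniform. The two inputs above confine the splitting type of $N_{L/X}$ on every line to integers in a narrow range, and on the lines meeting the generation locus it is exactly $\mathcal{O}(1)^{\,c+1}\oplus\mathcal{O}^{\,r-c}$; the work is to rule out jumping on the exceptional lines of $L$ and, with it, the exotic uniform bundles of rank $r+1$ on $\mathbb{P}^{r-1}$ that are not covered by the known classification of homogeneous bundles (for $r=5$, e.g., one must dismiss twists of $\wedge^2 T_{\mathbb{P}^4}$, which share the generic splitting type). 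For this I would exploit that $N_{G/X}|_L$ is a rank-two quotient of the generically globally generated $N_{L/X}$ by the uniform subbundle $T_L(-1)$, together with the nefness of $N_{G/X}|_{\ell'}$ as a quotient of the nef $T_X|_{\ell'}$ for the general lines $\ell'$ of $G$ (nefness obtained as in the proof of Lemma \ref{lem:picard}). Once $N_{G/X}|_L\cong\mathcal{O}_L(a)\oplus\mathcal{O}_L(b)$ is known, the sequence splits automatically, since
\[
\Ext^1_L\bigl(\mathcal{O}_L(a)\oplus\mathcal{O}_L(b),\,T_L(-1)\bigr)\cong H^1(L,T_L(-1-a))\oplus H^1(L,T_L(-1-b))=0
\]
for $r\geq 4$ — twist the Euler sequence of $\mathbb{P}^{r-1}$ down and use that $H^1$ and $H^2$ of every line bundle on $\mathbb{P}^{r-1}$ vanish when $r-1\geq 3$. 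Thus $N_{L/X}\cong T_L(-1)\oplus\mathcal{O}_L(a)\oplus\mathcal{O}_L(b)$.

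Comparing this splitting with the general-line computation of the second paragraph finishes the proof: restricting to a general $\ell$ gives $N_{L/X}|_\ell\cong\mathcal{O}(1)\oplus\mathcal{O}^{\,r-2}\oplus\mathcal{O}(a)\oplus\mathcal{O}(b)$, so the multiset of splitting degrees $\{1,0^{\,r-2},a,b\}$ must equal $\{1^{\,c+1},0^{\,r-c}\}$, whose entries are all $0$ or $1$. Since $a,b\geq 0$ and $a+b=c\geq 1$ (recall $c>0$ here), this is possible only for $c=1$, with $(a,b)=(0,1)$, giving $N_{L/X}\cong T_L(-1)\oplus\mathcal{O}_L(1)\oplus\mathcal{O}_L$ as in \ref{lemma:normal}.1, or for $c=2$, with $(a,b)=(1,1)$, giving $N_{L/X}\cong T_L(-1)\oplus\mathcal{O}_L(1)^{\oplus 2}$ as in \ref{lemma:normal}.3.
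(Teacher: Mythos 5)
Your setup is sound and agrees with the paper's: the identification $N_{L/G}\cong T_L(-1)$, the computation of the splitting type of $N_{L/X}$ on a general line, the observation that $N_{L/X}$ injects into $N_{L/\P^N}\cong\cO(1)^{\oplus N-r+1}$ so that every splitting degree is at most $1$, the vanishing of $\Ext^1_L(\cO(a)\oplus\cO(b),T_L(-1))$ for $\dim L\geq 3$, and the final numerical comparison forcing $c\in\{1,2\}$ are all correct. (The last comparison is legitimate only \emph{after} the global decomposition is known --- the splitting type of the middle term of an extension on $\P^1$ is not the union of those of the sub and quotient, as the Euler sequence already shows --- but you do apply it in that order.)

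The problem is that the step you yourself flag as ``the real obstacle'' --- promoting the generic splitting type to an actual decomposition $N_{G/X}|_L\cong\cO(a)\oplus\cO(b)$, i.e.\ ruling out jumping lines and exotic uniform bundles --- is exactly where the entire content of the lemma lies, and you only list the tools you ``would exploit'' without carrying out any argument. This is where the paper's proof does all its work, and it is genuinely case-dependent: for $c=1$ it invokes Sato's results (\cite[Lemma~2.4]{sato} to propagate the splitting type to all lines through a general point and \cite[Thm.~1.1]{sato} to conclude $N_{G/X}|_L\cong\cO(1)\oplus\cO$); for $c=2$ it first uses that $H^1(X,\cI_{G/X}(H))=0$ makes $\cI_{G/\P^N}$ generated by quadrics, whence $N_{G/X}\cong N^*_{G/X}(2)$ is globally generated --- note this is the actual role of that hypothesis, which in your write-up is only used for the much weaker fact that $L$ is a linear subspace of $\P^N$ --- and then splits into three subcases: Griffiths vanishing for $r>5$, a Chern--Wu computation showing $N_{G/X}|_L$ is big for $r=5$, and the Ballico--Ellia classification of uniform rank-$5$ bundles on $\P^3$ for $r=4$ (where one must explicitly exclude $\cO(1)^{\oplus 3}\oplus\cO^{\oplus 2}$ and $\Omega_L(2)\oplus\cO(1)\oplus\cO$). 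Without this step, nothing in your final paragraph can get started, so the proposal is a correct outline with the central argument missing rather than a proof.
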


\begin{proof} Take a general line $\ell\subset L\subset G$. 
  Since $N_{G/X}$ is g.g.g., then $N_{G/X}|_{\ell}=\mathcal{O}(a_\ell)
  \oplus \mathcal{O}(b_\ell)$ with $0 \leq a_\ell \leq b_\ell$ and
  $c=a_\ell+b_\ell$.  The vanishing $h^1(X, \mathcal{I}_{G/X}(H))=0$
  implies that the ideal sheaf $\mathcal{I}_{G/\P^N}$ is generated by
  quadrics and, in particular, $N_{G/\P^N}^*(2)$ and its quotient
  $N_{G/X}^*(2)$ are globally generated.  It follows that $a_\ell \leq
  b_\ell \leq 2$ and $c \leq 4$. Moreover \cite[Lemma~2.4]{sato} tells
  us that the restriction of $N_{G/X}|_L$ to any line $\ell'\subset L$
  through a general point of $L$ has the same splitting type,
  $(a_\ell,b_\ell)$.
  
  If $c=1$, the above description implies that $a_\ell=0$, $b_\ell=1$
  for any line $\ell\subset L$ passing through a general point $x\in
  L$.  Therefore $N_{G/X}|_L=\mathcal{O}(1) \oplus \mathcal{O}$ by
  \cite[Thm.  1.1]{sato}, and the exact sequence
\begin{equation}\label{exactsequence}
0 \to N_{L/G}\cong T_{L}(-1) \to N_{L/X} \to N_{G/X}|_{L} \to
0.
\end{equation}
splits: in fact $H^1(L,T_L(s))=0$ for all $s$ since $\dim(L)\geq 3$.
This leads us to the case \ref{lemma:normal}.1.

The same argument applies to $c=3$ and $c=4$. But in both cases we get
that $\cO(2)$ is a direct summand of $N_{L/X}$, contradicting the fact
that this is a subsheaf of $N_{L/\P^N}\cong\cO(1)^{\oplus n-r+1}$.

It remains to deal with the case $c=2$. Let us observe that in this
case $N_{G/X}\cong N_{G/X}^*(2)$ is globally generated, hence in
particular $N_{G/X}|_L$ is nef and Griffiths vanishing theorem
\cite[Variant~7.3.2]{lazarsfeld} tells us that
$H^{i}(L,N_{G/X}|_L(-2))=0$ for $r>4$, $i>0$ and
$H^{i}(L,N_{G/X}|_L(-3))=0$ for $r>5$, $i>0$.  In particular taking
cohomology on the Euler sequence tensored with $N^*_{G/X}|_L$ and
using the isomorphism $N_{G/X}|_L\cong N^*_{G/X}|_L(2)$, we obtain
$H^1(L,N^*_{G/X}|_L \otimes T_L(-1))=0$ and the exact sequence
(\ref{exactsequence}) splits for $r>5$.  Now observe that $N_{L/X}$ is
nef (as an extension of two nef vector bundles) and injects into
$N_{L/\P^N}$, thus it is uniform and its splitting type is composed of
0's and 1's. The splitting of (\ref{exactsequence}) implies that
$N_{G/X}|_L$ is uniform too and we get \ref{lemma:normal}.3 for $r>5$
by \cite[Thm.~3.2.3]{OSS}.

If $r=5$ and $c=2$, consider the tautological line bundle $\xi$ of the
projective bundle $\pi:\P(N_{G/X}|_L)\to L$. Since $N_{G/X}|_L$ is nef
then the Chern-Wu relation implies $\xi^4\cdot
\pi^*c_1(\mathcal{O}(1))=8-4c_2 \geq 0$, where $c_2$ stands for the
degree of the second Chern class of $N_{G/X}|_L$. Thus $c_2 \leq 2$.
But nefness also implies that $\xi^5=16-12c_2+c_2^2 \geq 0$, hence
$c_2\leq 1$, $\xi^5>0$ and $N_{G/X}|_L$ is big. In particular
\cite[Variant 7.3.3]{lazarsfeld} leads us to the vanishing
$h^{i}(L,N_{G/X}|_L(-a))=0$ for $i>0$ and $a=2,3$, allowing us to
conclude as in the case $r>5$.

The case $c=2$ and $r=4$ must be treated in a different manner. In
this case $N_{L/X}$ is a uniform vector bundle of rank $5$ and
splitting type $(0,0,1,1,1)$. Using the classification given in
\cite{BE} we get that $N_{L/X}$ is either
\begin{itemize}
\item $T_L(-1) \oplus \mathcal{O}(1)^{\oplus 2}$, or
\item $\mathcal{O}(1)^{\oplus 3} \oplus \mathcal{O}^{\oplus 2}$, or
\item $\Omega_L(2) \oplus \mathcal{O}(1) \oplus \mathcal{O}$.
\end{itemize}
The first case leads us again to \ref{lemma:normal}.3, and the last
two cases can be excluded by proving that $N_{L/X}$ cannot contain
$\cO$ as a direct summand. In fact, using the exact sequence
(\ref{exactsequence}), and taking into account that
$\Hom(T_L(-1),\cO)=0$, the cokernel $N_{G/X}|_L$ would contain $\cO$
as a direct summand, too, and the only possibility would be
$N_{G/X}|_L=\cO\oplus\cO(2)$. But then the sequence
(\ref{exactsequence}) would split, contradicting again the fact that
$N_{L/X}$ is a subsheaf of $N_{L/\P^N}\cong\cO(1)^{\oplus n-r+1}$.
\end{proof}

\begin{rmk}\label{generationbyquadrics} {\rm Let us point out that the
    hypothesis $H^1(X, \mathcal{I}_{G/X}(H))=0$ can be substituted by
    the hypothesis on the ideal sheaf $\mathcal{I}_{G/\P^N}$ to be
    generated by quadrics.}
\end{rmk}

The following arguments finish the proof of Theorem \ref{thm:mainthm}.

\begin{proof}[End of the proof] With the same notation and 
  assumptions as above, note that Lemma \ref{lemma:normal} implies
  $H^1(L,N_{L/X}(a))=0$ for all $a\in\Z$ and, in particular $[L]$ is a
  smooth point in $\Hilb(X)$. Denote by $\cH$ the unique component of
  $\Hilb(X)$ containing $[L]$ and, with the notation presented in
  \ref{notation:universalfamily} (where $L$ plays the role of $G$),
  denote by $N_\cH$ the vector bundle on the universal family $I_0$
  verifying that $N_\cH|_{L'}\cong N_{L'/X}$ for $[L']\in \cH_0$.  It
  will allow us to use semicontinuity.
  
  We claim that, if $c=1$ (respectively $c=2$), the normal bundle
  $N_{L'/X}$ splits again as $T_{L'}(-1)\oplus \mathcal{O}(1)\oplus
  \mathcal{O}$ (resp. as $T_{L'}(-1)\oplus \mathcal{O}(1)^{\oplus
    2}$). If $c=1$ (resp.  $c=2$) and $0 \leq j \leq r-1$, then
  $H^{i}(L,N_{L/X}^*(-j)) \ne 0$ if and only if $i=j=0,1, r-1$ (resp.
  $i=j=1, r-1$). Applying semicontinuity to $N_\cH^*$ and its twists,
  the same occurs for the general $L'$ so that we conclude by using
  the Beilinson spectral sequence \cite[Thm.~3.1.3]{OSS}.
  
  In any case the normal bundle of a general deformation $L'$ of $L$
  contains $\cO(1)$ as a direct summand, providing a smooth hyperplane
  section $X':=H\cap X$ containing $L'$ (cf. \cite{ABW},
  \cite[Cor.~1.7.5]{BSbook}). Therefore, by Bertini theorem, the
  general hyperplane containing $L'$ is smooth, too. Since $L'$ is
  general we may assume that such a section exists passing through the
  general point $x\in X$.  Moreover by construction of $X'$, either
  $N_{L'/X'}\cong T_{L'}(-1)\oplus \mathcal{O}$ if $c=1$, or
  $N_{L/X'}\cong T_{L'}(-1)\oplus \mathcal{O}(1)$ if $c=2$ (cf.
  \cite[Lem.~4.3]{nov-occ}). Note also that Lefschetz theorem provides
  $\Pic(X')=\Z$.
  
  At this point we apply \cite[Cor. 6.1.4]{nov-occ} to $X'$, obtaining
  that it is isomorphic to a linear section of the Pl\"ucker embedding
  of $\G(1,r+1)$ and $c$ is necessarily equal to $1$.
  
  Let $\mathcal{C}_x\subset\P(\Omega_{X,x})$ be the variety of minimal
  rational tangents to $X$ at a general point $x$ (cf.
  \cite{hwang-survey}), which in this case is the set of tangent
  directions to lines in $X$ through $x$.  Since $X'$ is a hyperplane
  section of $\mathbb{G}(1,r+1)$ then the corresponding hyperplane
  section of $\mathcal{C}_x$ is a hyperplane section of the Segre
  embedding $\P^1 \times \P^{r-1} \subset \P^{2r-1}$, in particular it
  is a variety of minimal degree in $\P(\Omega_{X',x})$.  Being
  $\mathcal{C}_x$ smooth by \cite[Prop.~1.5]{hwang-survey},
  $\mathcal{C}_x$ must be the Segre embedding $\P^1 \times \P^{r-1}
  \subset \P^{2r-1}$.
  
  In particular, through a general point $x \in X$ there exists an
  $r$-dimensional linear space $M \subset X$.  Moreover, the
  restriction of the normal bundle $N_{M/X}$ to a codimension one
  linear subspace $L' \subset M$ is $N_{M/X}|_{L'}\cong N_{L'/X'}\cong
  T_{L'}(-1)\oplus \mathcal{O}$ and we conclude that $X\cong\G(1,r+1)$
  by \cite[Main Thm.]{sato}.
\end{proof}

%%%%%%%%%%%%%%%%%%%%%%%%%%%%%%%%%%%%%%%%%%%%%%%%%%%%%%%%%%%%
\bibliographystyle{amsalpha}

\begin{thebibliography}{9999}
  
% \bibitem[A]{araujo} Araujo, C. {\it Rational curves of minimal degree
%     and characterizations of projective spaces}, Math. Ann. {\bf 335},
%   937-951 (2006).
  
\bibitem[ABW]{ABW} Andreatta, M., Ballico, E. and Wi\'sniewski. J.
  {\it Projective manifolds containing large linear subspaces},
  Classification of irregular varieties (Trento, 1990), Lecture Notes
  in Math. 1515, Springer, Berlin, 1992, pp. 1-11.
  
\bibitem[ABW2]{ABW2} Andreatta, M., Ballico, E. and Wi\'sniewski. J.
  {\it Two theorems on elementary contractions}, Math. Ann. {\bf 297},
  no. 2 191-198 (1993).
  
\bibitem[B]{B} B\u adescu, L. {\it Projective geometry and formal
    geometry.} Instytut Matematyczny Polskiej Akademii Nauk.
  Monografie Matematyczne (New Series), 65. Birkh\"auser Verlag,
  Basel, 2004.
  
\bibitem[BE]{BE} Ballico, E. and Ellia, Ph. {\it Fibr\'es uniformes de
    rang 5 sur $\P^3$}, Bull. Soc. Math. France, {\bf 111}, 59-87
  (1983).
  
\bibitem[BI]{BI} Beltrametti, M. C. and Ionescu, P. {\it On manifolds
    swept out by high dimensional quadrics}, Math. Z. {\bf 260}, no.
  1, 229-236, (2008).
   
\bibitem[BS1]{BS} Beltrametti, M. C. and Sommesse, A. J. {\it New
    properties of special varieties arising from adjunction theory},
  J. Math. Soc. Japan {\bf 43}, no. 2, 381-412 (1991).
  
\bibitem[BS2]{BSbook} Beltrametti, M. C. and Sommesse, A. J. {\it The
    Adjunction Theory of Complex Projective Varieties}, De Gruyter
  Expositions in Mathematics 16, De Gruyter, Berlin-New York, 1995.
  
\bibitem[BSW]{BSW} Beltrametti, M. C., Sommesse, A. J., and
  Wi\'sniewski, J. {\it Results on varieties with many lines and their
    applications to adjunction theory}, Complex Algebraic Varieties
  (Bayreuth, 1990), Lecture Notes in Mathematics 1507, Springer,
  Berlin, 1992, pp. 16--38.
  
\bibitem[CLM]{clm} Ciliberto, C., Lopez, A. F., Miranda, R.  {\it
    Classification of varieties with canonical curve section via
    Gaussian maps on canonical curves}, Amer. J. Math. {\bf 120} no.
  1, 1�21 (1998).
  
\bibitem[E]{Ein} Ein, L. {\it Varieties with small dual varieties II},
  Duke Math. Journal {\bf 52}, no. 4, 895-907 (1985).
  
\bibitem[El]{ellia} Ellia, Ph. {\it Sur les fibr\'es uniformes de rang
    $(n+1)$ sur $\P^n$}, M\'em. Soc. Math. France (N.S.) no. 7 (1982).
  
\bibitem[Fu]{Fu} Fu, B.  {\it Inductive characterizations of
    hyperquadrics.}  Math. Ann. {\bf 340}, no. 1, 185-194 (2008).
  
\bibitem[F1]{F1} Fujita, T. {\it Vector bundles on ample divisors}, J.
  Math. Soc. Japan {\bf 33}, no. 3, 405-414 (1981).
  
\bibitem[F2]{F2} Fujita, T. {\it Classification Theories of Polarized
    Varieties}. London Mathematical Society Lecture Note Series, no.
  155. Cambridge University Press, Cambridge, 1990.
  
\bibitem[Ha]{hartshorne} Hartshorne, R. {\it Algebraic geometry}.
  Graduate Texts in Mathematics, No. 52. Springer-Verlag, New
  York-Heidelberg, 1977.
  
\bibitem[Hw]{hwang-survey} Hwang, J-M. {\it Geometry f minimal
    rational curves on Fano manifolds}, School on Vanishing Theorems
  and Effective Results in Algebraic Geometry (Trieste, 2000), ICTP
  Lect. Notes, vol. 6, Abdus Salam Int. Cent. Theoret. Phys., Trieste,
  2001, pp. 335-393.
  
\bibitem[I]{I} Ionescu, P. {\it Generalized adjunction and
    applications}, Math. Proc. Camb. Phil. Soc. {\bf 99}, 457-472
  (1986).

%\bibitem[I]{I} Iskovskih, V.A. {\it Fano 3-folds I and II}, Math.
%USSR Izvestija {\bf 11} 485-527 (1977) and {\bf 12} 469-506
%(1978).
  
\bibitem[KS]{KS} Kachi, Y. and Sato, E. {\it Segre's Reflexivity and
    an Inductive Characterization of Hyperquadrics}, Mem. Am. Math.
  Soc. {\bf 160}, no. 763 (2002).
  
\bibitem[KeSo]{keso} Kebekus, S., Sol\'a Conde, L.E. {\it Existence of
    rational curves on algebraic varieties, minimal rational tangents,
    and applications.}  Global aspects of complex geometry, Springer,
  Berlin, 2006, pp. 359-416.
  
\bibitem[KO]{KO} Kobayashi, S., Ochiai., T. {\it Characterization of
    complex projective spaces and hyperquadrics}, J. Math. Kyoto Univ.
  {\bf 13}, 31-47 (1973).
  
% \bibitem[LP]{LP} Lanteri, A. and Palleschi, M. {\it Projective
%     Manifolds Containing Many Rational Curves}, Indiana Univ. Math.
%   J.  {\bf 36}, no. 4, 857-865 (1987).

%\bibitem[LPS]{LPS} Lanteri, A., Palleschi, M. and Sommesse A. J.
%{\it Discriminant loci of varieties with smooth normalization},
%Communications in Algebra {\bf 28}, (9), 4179-4200 (2000).
  
\bibitem[L]{lazarsfeld} Lazarsfeld, R. {\it Positivity in Algebraic
    Geometry II.} Springer-Verlag, Berlin-Heidelberg, 2004.
  
\bibitem[M1]{mukai1} Mukai, S. {\it Fano 3-folds}, in {\it Algebraic
    threefolds, Proc. Varenna 1981}, Lecture Notes in Math. 947,
  Sringer-Verlag, Berlin-New York, 1982, 35-92.
  
\bibitem[M2]{mukai2} Mukai, S. {\it Biregular classification of Fano
    threefolds and Fano manifolds of coindez 3}, Proc. Natl. Acad.
  Sci. USA {\bf 86}, 3000-3002 (1989).
  
\bibitem[NO]{nov-occ} Novelli, C., Occhetta, G. {\it Projective
    manifolds containing a large linear subspace with nef normal
    bundle}, preprint 2008 arxiv: 0712.3406v2.
  
\bibitem[OSS]{OSS}Okonek, C., Schneider, M. and Spindler, H. {\it
    Vector Bundles on Complex Projective Spaces}. Progress in
  Mathematics 3, Birkhausser, Boston, 1980.

%\bibitem[S1]{sato2} Sato, E. {\it Uniform vector bundles on a
%projective space}. J. Math. Soc. Japan {\bf 28}, 123-132 (1976).
  
\bibitem[S]{sato} Sato, E. {\it Projective manifolds swept out by
    large dimensional linear spaces}, Tohoku Math. J. {\bf 49},
  299-321 (1997).
  
\bibitem[W]{Wis} Wi\'sniewski. J. {\it On Fano manifolds of large
    index}, manuscripta math. {\bf 70}, 145-152 (1991).



\end{thebibliography}

\end{document}